\begin{document}
\title[A mixed characteristic analogue of the perfection of rings]{A mixed characteristic analogue of the perfection of rings and its almost Cohen-Macaulay property}

\author[R. Ishizuka]{Ryo Ishizuka}
\address{Department of Mathematics, Institute of Science Tokyo, 2-12-1 Ookayama, Meguro, Tokyo 152-8551}
\email{ishizuka.r.ac@m.titech.ac.jp}

\author[K. Shimomoto]{Kazuma Shimomoto}
% \address{Department of Mathematics, College of Humanities and Sciences, Nihon University, Setagaya-ku, Tokyo 156-8550, Japan}
\address{Department of Mathematics, Institute of Science Tokyo, 2-12-1 Ookayama, Meguro, Tokyo 152-8551, Japan}
\address{Department of Mathematics, Institute of Science Tokyo, 2-12-1 Ookayama, Meguro, Tokyo 152-8551, Japan}
\email{shimomotokazuma@gmail.com}

\thanks{2020 {\em Mathematics Subject Classification\/}: 13A35, 13B05, 13B35, 14G45}

\keywords{Almost mathematics, Cohen-Macaulay ring, Frobenius map, Non-archimedean Banach ring, Perfectoid ring, Perfect closure, Uniform completion}

%\subjclass{13}
%\subjclass[2000]{Primary 13-XX}
%\subjclass[2000]{Primary ; Secondary}
%\date{\today \, (\printtime)}
%\date{\today}

\begin{abstract}
Over a complete Noetherian local domain of mixed characteristic with perfect residue field, we construct a perfectoid ring which is similar to an explicit representation of a perfect closure in positive characteristic. Then we demonstrate that this perfectoid ring is almost Cohen-Macaulay in the sense of almost ring theory. The proof of this result uses Andr\'e's flatness lemma along with Riemann's extension theorem. We stress that the idea partially originates from the ``perfectoidization'' in the theory of prismatic cohomology.
\end{abstract}

\maketitle 

\setcounter{tocdepth}{1}
\tableofcontents
\section{Introduction}

All rings are assumed to be commutative with unity. The Cohen-Macaulay property has been a central object of study in the theory of Noetherian rings. Although it is not true that every Noetherian ring is Cohen-Macaulay, it has been recently proved that Noetherian local rings are dominated by ``big'' Cohen-Macaulay algebras (see \cite{andre2018Lemme} and \cite{andre2018Conjecture} for these results), which are not necessarily Noetherian. Therefore, it is inevitable to consider the non-Noetherian avatar of the Cohen-Macaulay property.

The aim of this paper is to investigate the following question: For a given complete Noetherian local domain $(R,\mfrakm,k)$ with its absolute integral closure $R^+$, can one find an almost Cohen-Macaulay $R$-algebra $T$ such that $R \subseteq T \subseteq R^+$ with an additional requirement that $T$ is obtained as the normalization (or much smaller integral extension) after adjoining all $p$-power roots of a fixed set generating the maximal ideal $\mfrakm$ to $R$? Following the method of P. Roberts developed in \cite{roberts2008Root} and \cite{roberts2010Fontaine}, our aim is to give an answer in the mixed characteristic case.

Let us explain our motivation. Let \((R, \mfrakm, k)\) be a complete Noetherian local domain of positive characteristic $p>0$ with perfect residue field \(k\). Then we can form the \emph{perfect closure} $R \to R_{\perf} \defeq \colim_F R$ in a functorial way.
The term perfect closure is often called \emph{perfection} or \emph{direct perfection} in the literature. This is a \emph{perfect} ring in the sense that the Frobenius map $F: R_{\perf} \to R_{\perf}$ is an isomorphism. Although $R_{\perf}$ is not necessarily Noetherian, it is known to be almost Cohen-Macaulay \cite{roberts2007Annihilators} (see \Cref{RobertsSinghSrinivas}).
So the perfect closure is expected to have good properties and we want to carry out a similar construction in mixed characteristic.

Since an analog of perfect property in mixed characteristic is \emph{(integral) perfectoid} introduced in \cite{scholze2012Perfectoida, bhatt2018Integral}, one approach to constructing an analog of perfect closure in mixed characteristic is \emph{perfectoidization} constructed by using prismatic theory \cite{bhatt2022Prismsa}.
Recently, H. Cai, S. Lee, L. Ma, K. Schwede, and K. Tucker used the theory of perfectoidization to construct an appropriate perfectoid algebra over some Noetherian ring of mixed characteristic in \cite{cai2023Perfectoid}. This construction is used to define perfectoid signature and perfectoid Hilbert--Kunz multiplicity. While this construction has many good properties thanks to the almost purity theorem \cite[Theorem 10.9]{bhatt2017Lecture}, it is rather hard to see their intrinsic structure.
To fill in this gap, we want to realize our construction by employing the idea of the perfect closure in positive characteristic commutative algebra.
% The original approach by Bhatt-Scholze in \cite{bhatt2022Prismsa} is based on defining an appropriate Grothendieck topology to compute cohomology.
% A second approach, which is due to Bhatt-Lurie and Drinfeld respectively, reformulates prismatic cohomology in a stacky way (see \cite{BL221}, \cite{BL222} and \cite{Drin20}). None of these approaches are easy to grasp.

Apart from their strategy, we focus on an explicit representation of the perfect closure as follows. Let $x_1,\ldots,x_n$ be any set of generators of the maximal ideal $\mfrakm$. Then one can describe the perfect closure \(R_{\perf}\) of $R$ explicitly in a fixed absolute integral closure \(R^+\):
\begin{equation} \label{perfectclosure1}
    R_{\perf}=\bigcup_{j \geq 0} R[x_1^{1/p^j},\ldots,x_n^{1/p^j}],
\end{equation}
where \(\{x_i^{1/p^j}\}_{j \geq 0}\) is a compatible system of \(p\)-power roots of \(x_i\) in \(R^+\) for each \(i=1,\ldots,n\).
As one already knows that taking the perfect closure is functorial, the right side of $(\ref{perfectclosure1})$ does not depend on the choice of $x_1,\ldots,x_n$.
Although $R_{\perf}$ is only defined in positive characteristic, the right side of $(\ref{perfectclosure1})$ can be constructed in mixed characteristic.
So the above speculation naturally lets us consider the mixed characteristic analogue.

% \begin{problem} \label{Problem}
%     Assume that $(R,\mfrakm,k)$ is a complete Noetherian local domain of mixed characteristic \((0, p)\) with perfect residue field \(k\). Choose any set of generators $x_1,\ldots,x_n$ of $\mfrakm$ and their compatible system of \(p\)-power roots \(\{x_i^{1/p^b}\}_{n \geq 0}\) in an absolute integral closure \(R^+\). Then is the $p$-adic completion of the subring of \(R^+\)
%     \begin{equation} \label{perfectclosure2}
%     \bigcup_{j \geq 0} R[x_1^{1/p^j},\ldots,x_n^{1/p^j}]
%     \end{equation}
%     perfectoid, or almost Cohen-Macaulay?
% \end{problem}

% Apparently, $(\ref{perfectclosure1})$ and $(\ref{perfectclosure2})$ do not show any sort of distinctions. However, while the former exists uniquely, the latter does not.
Our main theorem asserts that the right hand side of $(\ref{perfectclosure1})$ offers a substitute as a mixed characteristic analogue of the perfect closure. Moreover, we emphasize that our construction of a perfectoid ring has the advantage that it can be carried over a complete Noetherian local domain directly.
% Some notations are noted at the end of this section.
% See also Remark \ref{BhattProof}.

\begin{maintheorem} \label{maintheorem}
    Let $(R,\mfrakm,k)$ be a complete Noetherian local domain of mixed characteristic $p>0$ with perfect residue field $k$. Let $p,x_2,\ldots,x_n$ be a system of (not necessarily minimal) generators of the maximal ideal $\mfrakm$ such that $p,x_2,\ldots,x_d$ forms a system of parameters of $R$. Choose compatible systems of $p$-power roots 
    \begin{equation} \label{compaseq1}
        \{p^{1/p^j}\}_{j \ge 0},\{x_2^{1/p^j}\}_{j \ge 0},\ldots,\{x_n^{1/p^j}\}_{j \ge 0}
    \end{equation}
    inside the absolute integral closure $R^+$. Let $\widetilde{R}_{\infty}$ (resp. $C(R_{\infty})$) be the integral closure (resp. $p$-root closure) of
    \begin{equation*}
        R_{\infty}  \defeq  \bigcup_{j \ge 0} R[p^{1/p^j},x_2^{1/p^j},\ldots,x_n^{1/p^j}]
    \end{equation*}
    in $R_{\infty}[1/p]$. Let $\widehat{\widetilde{R}}_{\infty}$, $\widehat{R}_{\infty}$, and $\widehat{C(R_{\infty})}$ be the $p$-adic completion of $\widetilde{R}_{\infty}$, $R_{\infty, \infty}$ and $C(R_{\infty, \infty})$, respectively. Then there exists a nonzero element $g \in \widehat{R}_{\infty}$ together with a compatible system of $p$-power roots $\{g^{1/p^j}\}_{j \ge 0} \subseteq \widehat{R}_{\infty}$ of $g$ such that the following properties hold:
    \begin{enumerate}
    \item The ring map $\widehat{R}_{\infty} \to \widehat{\widetilde{R}}_{\infty}$ is $(p)^{1/p^\infty}$-almost surjective.
    \item $\widehat{\widetilde{R}}_{\infty}$ is a perfectoid domain that is a subring of $\widehat{R^+}$. Moreover, the image of $g$ under the map $\widehat{R}_{\infty} \to \widehat{\widetilde{R}}_{\infty}$ is nonzero.
    \item Let $A \defeq  W(k)[|x_2, \dots, x_d|] \hookrightarrow R$ be a module-finite extension taken by Cohen's structure theorem. If $R$ is a normal domain, $\widehat{\widetilde{R}}_{\infty}$ and $\widehat{C(R_{\infty})}$ are $(pg)^{1/p^\infty}$-almost flat and $(pg)^{1/p^\infty}$-almost faithful $A$-algebras. In particular, this is a \((pg)^{1/p^\infty}\)-almost Cohen-Macaulay \(A\)-algebra.
    % $\widehat{\widetilde{R}}_{\infty}$
    % and $\widehat{C(R_{\infty})}$ are $(pg)^{1/p^\infty}$-almost Cohen-Macaulay algebras with respect to $p,x_2,\ldots,x_d$.
    \item Assume that $R$ is a normal domain. Then for the ring $A$ as in $(3)$, there exists a nonzero element $h \in A$ such that $A[1/h] \to \widetilde{R}_{\infty}[1/h]$ is a filtered colimit of finite \'etale $A[1/h]$-algebras contained in $\widetilde{R}_{\infty}[1/h]$.
    \end{enumerate}
\end{maintheorem}
    
% In particular, we have a (partial) answer to Problem \ref{Problem} as follows:

% \begin{corollary}
%     With notation as in Main Theorem \ref{maintheorem}, the ring $\widehat{\widetilde{R}}_{\infty}$ and \(\widehat{C(R_{\infty, \infty})}\) are \((pg)^{1/p^\infty}\)-almost Cohen-Macaulay \(T\)-algebras.
% \end{corollary}

The construction of $\widehat{\widetilde{R}}_{\infty}$ can be considered as a generalization of what was previously constructed for complete regular local rings as in \cite{shimomoto2018Integral}. We point out that it is not clear if one can find $g$ from $R_{\infty}$ instead of $\widehat{R}_{\infty}$. One might feel that \Cref{maintheorem} is similar to \cite{ma2022Short,cai2023Perfectoid} in spirit. However, our construction is much more similar to the construction of the perfect closure and there are some distinctions in the proof. The construction in \cite{ma2022Short, cai2023Perfectoid} relies directly on \emph{Andr\'e's flatness lemma} and \emph{almost purity theorem} based on the theory of \emph{perfectoidization} introduced in \cite{bhatt2022Prismsa}.

Our proof does not require using the theory of perfectoidization and we use more ``classical'' objects. For example, \emph{Riemann's extension theorem}, together with the following Bhatt's theorem (see \cite[Theorem 2.9.12 at page 120]{bhatt2019Perfectoid}), whose proof uses Andre's flatness lemma (without prismatic theory).

\begin{theorem}[Bhatt] \label{flatness}
    Let $A$ be a perfectoid Tate ring with a closed ideal $J \subseteq A$. Let $B$ be the uniform completion of $A/J$ as a Banach ring (see \cite[Remark 1.5.4 and Corollary 1.5.22]{bhatt2019Perfectoid} and Definition \ref{Seminorm}).
    Then $B$ is a perfectoid Tate ring and the natural ring map $A \to B$ is surjective.
\end{theorem}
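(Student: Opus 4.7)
The plan is to follow Bhatt's argument in the Arizona Winter School notes, building an integral model for $B$ and using Andr\'e's flatness lemma to impose perfectoid structure on it. First, fix a pseudo-uniformizer $\pi \in A^\circ$ admitting a compatible system of $p$-power roots and with $\pi^p \mid p$; such a $\pi$ exists by the perfectoid hypothesis on $A$. Set $A_0 := A^\circ$, so that $A_0$ is $\pi$-adically complete and Frobenius induces an isomorphism $A_0/\pi \xrightarrow{\sim} A_0/\pi^p$. Let $I := J \cap A_0$, and let $B_0$ be the $\pi$-adic completion of $A_0/I$. After inverting $\pi$, the ring $B_0$ should realize the uniform completion $B$ of $A/J$, essentially because the uniform norm on a Tate ring is characterized by power-boundedness, and $B_0$ is the natural candidate for the ring of power-bounded elements.

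With this setup, the next step is to show that $B$ is a perfectoid Tate ring. This reduces to verifying that the Frobenius on $B_0/\pi$ is surjective modulo a bounded power of $\pi$. Since $A_0/(\pi, I)$ may fail this property directly, I would invoke Andr\'e's flatness lemma to pass to a perfectoid $A$-algebra $A'$ in which every element of $I$ acquires a compatible system of $p$-power roots, with $A \to A'$ being $(p)^{1/p^\infty}$-almost faithfully flat. Over $A'$, Frobenius surjectivity on the integral quotient by the extended ideal is built in, and the almost faithful flatness of $A \to A'$ descends this property back to $A_0/I$. Combined with the $\pi$-adic completeness of $B_0$, this gives the perfectoid Tate property for $B$.

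For the surjectivity of $A \to B$, the strategy is to fix $b \in B$ and construct a preimage by iterated approximation: choose $a_0 \in A$ whose image in $A/J$ approximates $b$ modulo $\pi$ in the uniform norm, then use the just-established surjectivity of Frobenius on the integral model to produce correction terms $a_n \in A$ with $a_{n+1}-a_n \in \pi^n A_0 + J$, so that $\sum (a_{n+1}-a_n)$ converges in the $\pi$-adically complete ring $A$ to an element mapping to $b$. The main obstacle is precisely this final surjectivity: for an arbitrary Banach quotient the map to the uniform completion is only dense-image, and it is the perfectoid structure---specifically, the compatibility between the integral models of $A$ and $B$ mediated by Frobenius---that upgrades density to genuine surjectivity. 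Carefully tracking the almost-structure during the descent from $A'$ back to $A$, and verifying that the resulting lifts do converge in $A$ rather than only in some completion of a quotient, is the delicate technical core of the proof.
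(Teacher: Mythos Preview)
The paper does not contain a proof of this theorem. It is stated as a result of Bhatt with the citation \cite[Theorem 2.9.12 at page 120]{Arizona17}, and the text just before it remarks that its proof uses Andr\'e's flatness lemma. The paper then uses the statement as a black box in the proof of Proposition \ref{almostsur1}. So there is no proof in the paper to compare your proposal against.

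That said, since you are explicitly trying to reconstruct Bhatt's argument, a few comments on your sketch. The overall shape---reduce to an integral model, invoke Andr\'e's flatness lemma to adjoin $p$-power roots of generators of $I$, descend---is the right strategy. However, your identification of $B_0 := (A_0/I)^{\wedge}$ with an integral model of the uniform completion is where the real work hides: the power-bounded elements of $A/J$ need not coincide with the image of $A_0$, so one must argue that after passing to $A'$ (where the ideal is generated by elements with $p$-power roots) the integral quotient genuinely computes $(A'/JA')^{u,\wedge}$, and then descend that identification. Your descent step ``almost faithful flatness descends Frobenius surjectivity back to $A_0/I$'' is asserted but not justified; in Bhatt's actual argument this is handled by showing the relevant map is an almost isomorphism, not merely by a flatness-style descent of a property. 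Finally, your surjectivity argument by successive approximation is plausible in outline, but in the source the surjectivity is obtained more structurally (via tilting and the corresponding statement in characteristic $p$, where quotients of perfect Banach algebras are already uniform), rather than by an ad hoc convergence scheme.
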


On the other hand, it is not entirely irrelevant to perfectoidization.
First, because of the first-named author's work \cite{ishizuka2024Calculationa}, we know that our construction \(\widehat{C(R_{\infty})}\) can be obtained by the perfectoidization \((\widehat{R}_{\infty})_{\perfd}\) of \(\widehat{R}_{\infty}\) and so our main theorem tells us that the perfectoidization of a perfectoid ring is almost Cohen-Macaulay (\Cref{RemPerfdAlmostCM}).
Second, we could show something similar statement using prefectoidization (\Cref{almostsur1}, \Cref{BhattProof} and \Cref{BhattProof2}), but it would differ from our techniques.

It will be clear that the construction of $R_{\infty}$ in Main Theorem \ref{maintheorem} has flexibility in the sense that one can choose $p,x_2,\ldots,x_n$ in a rather arbitrarily manner.
Another construction of almost Cohen-Macaulay algebras was recently given in \cite{nakazato2023Variant}.
Most of the properties that hold in the construction in \cite{nakazato2023Variant} can be checked in $\widetilde{R}_{\infty}$, except that $\widetilde{R}_{\infty}$ is not necessarily integrally closed in its total ring of fractions.

% Recently, H. Cai, S. Lee, L. Ma, K. Schwede, and K. Tucker used the theory of perfectoidization to construct a perfectoid algebra over some Noetherian ring of mixed characteristic in \cite{cai2023Perfectoid}. While this construction has many good properties thanks to the almost purity theorem \cite[Theorem 10.9]{bhatt2017Lecture}, it is rather hard to see their intrinsic structure. To fill in this gap, we want to realize our construction by employing the idea of the perfect closure in positive characteristic commutative algebra. In fact, our construction also satisfies the main properties of the construction appearing in \cite[Theorem 4.0.3]{cai2023Perfectoid}.
% It might be possible to define the perfectoid signature and perfectoid Hilbert-Kunz multiplicity by using our construction $\widehat{\widetilde{R}}_{\infty}$ instead of $R_{\perfd}^{A_{\infty}}$ as in \cite[Definition 4.0.8]{cai2023Perfectoid}.

\subsection*{Notation} \label{Notation}
We use some notation following \cite{nakazato2022Finite} and \cite{nakazato2023Variant}.

\begin{itemize}
    \item Let $A$ be a topological ring. A subset $B$ in $A$ is called \emph{bounded} if for any open neighborhood $U$ of $0 \in A$, there exists some open neighborhood $V$ of $0 \in A$ such that $V \cdot B \subseteq U$. An element $x \in A$ is called \emph{power-bounded} if the subset $\{x^n\}_{n \geq 0} \subseteq A$ is bounded. The symbol $A^\circ$ is the set of all power-bounded elements of $A$.

    \item We call a topological ring $A$ a \emph{Tate ring} if $A$ has an open subring $A_0$ whose induced topology is equal to the $t$-adic topology for some $t \in A_0$ which is a unit in \(A\). The element $t$ is called a \emph{pseudo-uniformizer} of $A$ and then we have $A = A_0[1/t]$. A \emph{map of Tate rings} is a continuous homomorphism of topological rings.
    
    \item Let $A_0$ be a ring and let $t$ be a non-zero-divisor of $A_0$. The \emph{Tate ring associated to $(A_0,(t))$} is the topological ring $A_0[1/t]$ whose topology is the linear topology induced from $\{t^n A_0\}_{t \geq 0}$.
    The completion of the Tate ring $A_0[1/t]$ as a topological ring is isomorphic to $\widehat{A_0}[1/t]$ where $\widehat{A_0}$ is the $t$-adic completion of $t$-adic topological ring $A_0$.
    We call this completion $\widehat{A_0}[1/t]$ as \emph{$t$-completion}.
    
    \item A \emph{Banach ring} is a pair of a complete topological ring \(A\) a norm on \(A\) in the sense of \cite[P.63 Definition 1.5.1]{bhatt2019Perfectoid} and the topology on $A$ induced by the norm is equal to the topology of $A$. A \emph{map of Banach rings} is a continuous homomorphism of topological rings.
    
    \item A ring $R$ is called \emph{$t$-adically Zariskian} for some $t \in R$ if the Jacobson radical of $R$ contains $t$. We use the notion of \emph{$G$-Galois extension} in the sense of \cite[Appendix C]{nakazato2023Variant}.
    
    \item For any $p$-torsion-free ring $R$, the \emph{$p$-root closure} of $R$ in $R[1/p]$ is the set of all $x \in R[1/p]$ such that $x^{p^{n}} \in R$ for some $n \geq 0$. This is a subring of $R[1/p]$ by \cite{roberts2008Root} and we denote this ring by $C(R)$. See also Definition \ref{n-rootclosure} in the more general situation.
    
    \item We use the notion of \emph{perfectoid ring} as defined in \cite[Definition 3.5]{bhatt2018Integral} and of \emph{perfectoid Tate ring} as defined to be the one of Fontaine's sense in \cite{bhatt2018Integral} and \cite{fontaine2013Perfectoides}.
    For any $p$-adically complete ring $A$, we consider a multiplicative monoid map $\sharp : A^\flat  \defeq  \lim_{F} A/pA \to A$ which is defined in \cite[Remark 2.0.9]{bhatt2017Lecture}.
    
    \item The notion of \emph{almost Cohen-Macaulay algebra} is defined in \cite[Definition 5.4]{shimomoto2018Integral} (\Cref{DefAlmostFFlat}). A \emph{regular sequence} \(x_1, \dots, x_r\) of a ring \(A\) is a sequence of elements such that the multiplication map \(A/(x_1, \dots, x_{i-1}) \xrightarrow{\times x_{i+1}} A/(x_1, \dots, x_i)\) is injective for each \(i = 1, \dots, r\) and \(A/(x_1, \dots, x_r)A \neq 0\).
    
    \item We use two types of symbols $\widehat{(-)}$ and $(-)^{\land}$ for ideal-adic completion to eliminate visual confusion.
    
\end{itemize}

\subsection*{Acknowledgement}
We would like to thank Shinnosuke Ishiro, Fumiharu Kato, Kei Nakazato, and Shunsuke Takagi for their valuable conversations and suggestions.

Furthermore, Bhargav Bhatt taught us a simple and different proof of Main Theorem \ref{maintheorem} as detailed in Remark \ref{BhattProof} at the end of Section 6. His suggestion strengthened the relationship between perfectoidization and our result. We highly appreciate his contribution.

Also, we would like to thank the referee for their careful reading and valuable comments, which helped us improve the presentation of this paper.

\section{Preliminary Lemmas}

\begin{lemma} \label{integralExt1}
    Let $A \hookrightarrow B$ be an integral ring extension with an element $t \in A$ which is a non-zero-divisor of $B$. Assume that $A$ is integrally closed in $A[1/t]$. Then $A \hookrightarrow B$ induces an injective map:
    \begin{equation*}
        A/t^nA \hookrightarrow B/t^nB
    \end{equation*}
    for any $n>0$. Furthermore, after taking the $t$-adic completion, $A \hookrightarrow B$ induces an injective map $\widehat{A} \hookrightarrow \widehat{B}$.
\end{lemma}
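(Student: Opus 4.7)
The plan is to reduce the first assertion to the hypothesis that $A$ is integrally closed in $A[1/t]$. Suppose $a \in A$ lies in $t^n B$, so $a = t^n b$ for some $b \in B$. To conclude $a \in t^n A$, it suffices to show that $b \in A$. The idea is to exhibit $b$ as an element of $A[1/t]$ that is integral over $A$.

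First I would observe that, since $t$ is a regular element of $B$ and $A \hookrightarrow B$, the localization map $A[1/t] \hookrightarrow B[1/t]$ is injective: any element $x/t^k$ of $A[1/t]$ killed by the map satisfies $t^m x = 0$ in $B$ for some $m \geq 0$, and regularity of $t$ on $B$ forces $x = 0$ in $B$, hence in $A$. With this injection in hand, the relation $b = a/t^n$ in $B[1/t]$ identifies $b$ with the image of the element $a/t^n \in A[1/t]$. On the other hand, $b \in B$ is integral over $A$ because $B$ is an integral extension of $A$. Since $A$ is assumed integrally closed in $A[1/t]$, I conclude $b \in A$, so $a = t^n b \in t^n A$. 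This proves $A/t^n A \hookrightarrow B/t^n B$.

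For the final assertion, I would pass to the inverse limit. The short exact sequence
\begin{equation*}
0 \longrightarrow A/t^n A \longrightarrow B/t^n B
\end{equation*}
is natural in $n$, and the inverse limit functor is left exact, so applying $\varprojlim_n$ yields an injection $\widehat{A} \hookrightarrow \widehat{B}$, which is exactly the $t$-adic completion of $A \hookrightarrow B$.

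The only genuinely nontrivial input is the injectivity of $A[1/t] \hookrightarrow B[1/t]$, which is where the regularity of $t$ on $B$ is used; the rest of the argument is a direct application of the integral closure hypothesis and the left exactness of $\varprojlim$. I do not expect any serious obstacle beyond correctly tracking the identifications inside $B[1/t]$.
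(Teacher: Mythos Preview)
Your proposal is correct and follows essentially the same approach as the paper: both arguments show $A \cap t^n B = t^n A$ by noting that if $a = t^n b$ with $a \in A$ and $b \in B$, then $b = a/t^n$ lies in $A[1/t] \cap B$, is integral over $A$, and hence lies in $A$ by the integral-closedness hypothesis; the completion statement then follows from left exactness of the inverse limit. The paper reduces to $n=1$ and is terser about the identification inside $B[1/t]$, but the substance is identical.
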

    
\begin{proof}
    Without loss of generality, we may assume $n=1$. It suffices to prove that $tA=A \cap tB$. As the inclusion $\subseteq$ is obvious, we prove the other inclusion. So let $b \in A \cap tB$. Since $t \in A$, it follows that
    \begin{equation*}
        b/t \in A[1/t]~\text{and}~b/t \in B.
    \end{equation*}
    By the hypotheses of the integrality of $B$ over $A$ and integral closedness of $A$ in $A[1/t]$, we find that $b/t \in A$. In other words, $b \in tA$. Since the inverse limit functor is left exact, we get the second assertion.
\end{proof}
    
Following the paper \cite{anderson1990Root}, we recall the definition of root closure of rings \cite[Proposition 1.1]{anderson1990Root}.
    
\begin{definition} \label{n-rootclosure}
    Fix an integer $n>0$. Let $A \hookrightarrow B$ be a ring extension. Then we say that $A$ is \emph{$n$-root closed in $B$} if we have $x^n \in A$ for $x \in B$, then $x \in A$. When $A$ is not $n$-root closed in $B$, set $C_0(A,B) \defeq A$. For $m \in \mathbb{N}$, let $C_{m+1}(A,B)$ be the sub-\(C_m(A,B)\)-algebra generated by the set of those elements $x \in B$ such that $x^n \in C_m(A,B)$. The directed union $C_\infty(A,B) \defeq \bigcup_{m>0}C_m(A,B)$ is called the \emph{total $n$-root closure} of $A$ in $B$.

    In the case of \(p\)-root closedness, Roberts \cite{roberts2008Root} proved that the total \(p\)-root closure \(C(R)\) of a \(p\)-torsion-free ring \(R\) in \(R[1/p]\) can be obtained by
    \begin{equation*}
        C(R) = \set{x \in R[1/p]}{x^{p^n} \in R~\text{for some}~n \geq 0}.
    \end{equation*}
    We call \(C(R)\) the \emph{\(p\)-root closure} of \(R\).
\end{definition}
    
We will be mostly interested in the case that $n$ is a fixed prime number and $B=A[1/n]$.
    
\begin{lemma} \label{integralExt2}
    Fix $n \in \mathbb{N}$. Let $A$ be a ring with an element $t \in A$, and let $A \hookrightarrow B$ be a ring extension such that $A$ is $n$-root closed in $B$ and there exists an integer $N>0$ such that $t^NB \hookrightarrow A$. Then $tB \hookrightarrow A$. In particular, any Noetherian N-2 domain $A$ and its integral closure $B$ in $A[1/t]$ such that $A$ is $n$-root closed in $B$ satisfies these conditions.
\end{lemma}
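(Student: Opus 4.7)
The plan is to prove the first assertion by descending induction on the exponent $N$ in the hypothesis $t^N B \subseteq A$. The case $n = 1$ is trivial ($1$-root closedness forces $A = B$), so I may assume $n \geq 2$; in this range $\lceil N/n \rceil < N$ whenever $N \geq 2$, so it suffices to show that $t^N B \subseteq A$ implies $t^{\lceil N/n \rceil} B \subseteq A$. Iterating this inequality drives the exponent down to $1$ in finitely many steps, which is exactly the desired conclusion $tB \subseteq A$.

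For the inductive step, I would fix an arbitrary $b \in B$ and consider $c := t^{m} b \in B$ with $m := \lceil N/n \rceil$. The key computation is
\[
c^n = t^{mn} b^n = t^{mn - N} \cdot (t^N b^n),
\]
where $mn - N \geq 0$ by the choice of $m$. The factor $t^N b^n$ lies in $A$ by the hypothesis $t^N B \subseteq A$ applied to $b^n \in B$, and $t^{mn - N} \in A$ since $t \in A$. Hence $c^n \in A$, and because $A$ is $n$-root closed in $B$, I conclude $c = t^m b \in A$. As $b \in B$ was arbitrary, $t^m B \subseteq A$, completing the induction.

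For the ``In particular'' clause, I would use the standard fact that the N-2 hypothesis forces the integral closure $B$ of $A$ in $A[1/t]$ to be a finitely generated $A$-module. Writing $B = Ab_1 + \cdots + Ab_k$ with each $b_i \in A[1/t]$, there exists $N_i \geq 0$ with $t^{N_i} b_i \in A$ for each $i$; taking $N := \max_i N_i$ yields $t^N B \subseteq A$. Combined with the $n$-root closedness assumption, the first part then gives $tB \subseteq A$.

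I do not foresee any real obstacle. The only non-obvious ingredient is the choice of exponent $m = \lceil N/n \rceil$, which is calibrated so that $(t^m b)^n$ carries enough powers of $t$ to land in $A$ via the hypothesis while $m$ itself is strictly smaller than $N$; this lets $n$-root closedness contract the exponent by roughly a factor of $n$ at each step.
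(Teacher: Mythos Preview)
Your proof is correct and follows essentially the same idea as the paper's: raise $t^m b$ to an $n$-th power so that enough factors of $t$ appear to invoke $t^N B \subseteq A$, then use $n$-root closedness to descend. The only cosmetic difference is that the paper first enlarges $N$ to a power $n^k \ge N$ and then applies $n^k$-root closedness (an immediate consequence of $n$-root closedness) in a single stroke, whereas you run the reduction $N \mapsto \lceil N/n\rceil$ as an explicit descending induction; both arguments are equivalent, and your treatment of the ``In particular'' clause via the N-2 hypothesis is exactly what the paper leaves implicit.
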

    
\begin{proof}
    Without losing generality, we may assume that $t^{n^k}B \hookrightarrow A$ for some $k > 0$. Pick any element $b \in B$. Then we get $(tb)^{n^k}=t^{n^k}b^{n^k} \in A$. Since $A$ is $n$-root closed in $B$, we get $tb \in A$, as required.
\end{proof}
    
\Cref{integralExt2} is useful in describing a ring of integral elements of a certain Tate ring in the sense of almost ring theory. Such a ring includes perfectoid rings or more generally, uniform Tate rings (defined in \Cref{Uniform}) over any basic setup.
    
Let us record the following definitions.

\begin{definition}[{cf. \cite[Proposition 2.3.8]{cai2023Perfectoid}}] \label{DefAlmostFFlat}
    Let \(A\) be a ring with an element \(t \in A\) and let \(B\) be an \(A\)-algebra admitting a compatible sequence of \(p\)-power roots \(\{t^{1/p^j}\}_{j \geq 0}\) of \(t\).
    A \(B\)-module \(N\) is \emph{\((t)^{1/p^\infty}\)-almost zero} if \(t^{1/p^n} N = 0\) for any \(n \geq 0\).
    A \(B\)-algebra \(C\) is \emph{\((t)^{1/p^\infty}\)-almost flat over \(A\)} if the \(B\)-module \(\Tor_i^{A}(C, M)\) is \((t)^{1/p^\infty}\)-almost zero for any \(A\)-module \(M\) and any \(i > 0\).
    A \(B\)-algebra \(C\) is \emph{\((t)^{1/p^\infty}\)-almost faithful over \(A\)} if \(C \otimes_A M\) is not \((t)^{1/p^\infty}\)-almost zero for any non-zero \(A\)-module \(M\).

    Assume that \(A\) is a Noetherian local ring with the unique maximal ideal \(\mfrakm\).
    A \(B\)-module \(N\) is \emph{\((t)^{1/p^\infty}\)-almost Cohen-Macaulay} if \(N/\mfrakm N\) is not \((t)^{1/p^\infty}\)-almost zero and any system of parameters \(x_1, \dots, x_d\) of \(A\) satisfies
    \begin{equation*}
        t^{1/p^n} \cdot \frac{((x_1, \dots, x_i):_N x_{i+1})}{(x_1, \dots, x_i)N} = 0
    \end{equation*}
    for each \(i = 0, \dots, d-1\) and any \(n \geq 0\).
\end{definition}

\begin{proposition}[cf. {\cite[Proposition 1.4]{roberts2007Annihilators}}] \label{RobertsSinghSrinivas}
Let $(R,\mfrakm,k)$ be a complete Noetherian reduced local ring of equal characteristic $p>0$ with perfect residue field \(k\). Let us choose a system of parameters $t_1, \ldots, t_d$ of $R$ and take the perfect closure $R_{\perf}$ of $R$. Let $A \defeq k[|t_1,\ldots,t_d|]$ be an unramified complete regular local ring and let $A \hookrightarrow R$ be a module-finite extension taken by Cohen's structure theorem.
    Then there exists a nonzero element \(x \in A\) such that \(t_1, x\) is a regular sequence of \(A\) and the following conditions hold.
    \begin{enumerate}
        \item \(A_{\perf}[1/t_1x] \hookrightarrow R_{\perf}[1/t_1x]\) is finite \'etale.
        \item $R_{\perf}$ is a $(t_1x)^{1/p^\infty}$-almost flat $A$-algebra, i.e., $\Tor_i^{A}(R_{\perf}, M)$ is $(t_1x)^{1/p^\infty}$-almost zero for every $A$-module $M$ and all $i > 0$, and
        \item $R_{\perf}$ is a $(t_1x)^{1/p^\infty}$-almost faithful $A$-algebra, i.e., $R_{\perf} \otimes_A M$ is not $(t_1x)^{1/p^\infty}$-almost zero for every non-zero $A$-module $M$.
        \item In particular, $R_{\perf}$ is a $(t_1x)^{1/p^\infty}$-almost Cohen-Macaulay $A$-algebra.
\end{enumerate}
\end{proposition}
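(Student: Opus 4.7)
The strategy is to establish a $(g)^{1/p^\infty}$-almost isomorphism between $R_{\perf}$ and a finite free $A_{\perf}$-module, after which statements (1)--(3) follow by routine almost-ring arguments. The key input is that the torsion-free module-finite extension $A \hookrightarrow R$ of reduced rings becomes generically free: by trace-pairing/discriminant arguments (and, if necessary, absorbing inseparability by passing to a Frobenius twist inside $R_{\perf}$), one produces a nonzero element $g \in A$ and an integer $n \ge 0$ such that $R[1/g]$ is free of rank $n$ over $A[1/g]$.

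Fixing an $A[1/g]$-basis and clearing denominators then yields $A$-linear maps
\[
\alpha\colon A^n \longrightarrow R, \qquad \beta\colon R \longrightarrow A^n
\]
whose two compositions are each multiplication by a power of $g$; after replacing $g$ by that power one may assume $\alpha \circ \beta = g \cdot \id_R$ and $\beta \circ \alpha = g \cdot \id_{A^n}$. Since $A_{\perf} = \bigcup_{j \ge 0} A[x_1^{1/p^j}, \dots, x_d^{1/p^j}]$ is a faithfully flat $A$-algebra (by Kunz, applied iteratively) in which every $g^{1/p^j}$ is available, applying the $j$-th iterate of the inverse Frobenius to the defining identities produces $A_{\perf}$-linear maps between $A_{\perf}^n$ and $R_{\perf}$ whose compositions are multiplication by $g^{1/p^j}$ for every $j \ge 0$. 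This is precisely an $(g)^{1/p^\infty}$-almost isomorphism $R_{\perf} \simeq_{\mathrm{a}} A_{\perf}^n$ of $A_{\perf}$-modules.

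From this, the three properties fall out formally: (1) almost flatness over $A$ holds because $A_{\perf}^n$ is $A$-flat, so $\Tor_i^A(R_{\perf}, M)$ is $(g)^{1/p^\infty}$-almost zero for every $A$-module $M$ and every $i > 0$; (2) almost faithfulness holds because $A_{\perf}$ is faithfully flat over $A$, so $A_{\perf}^n \otimes_A M$ cannot be almost zero for $M \ne 0$, and the same transfers to $R_{\perf}$; (3) the system of parameters $x_1, \dots, x_d$ remains a regular sequence on $A_{\perf}^n$ by flatness over the regular local ring $A$, and the almost isomorphism transports this regularity to $R_{\perf}$, giving the almost Cohen-Macaulay property.

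The main technical obstacle is the first step, namely arranging both that $g$ lies in $A$ (not merely in $R$) and that possible inseparability of $R \otimes_A \Frac(A)$ over $\Frac(A)$ can be bypassed; this is precisely where one uses that the target is $R_{\perf}$ rather than $R$, so that taking Frobenius twists is harmless. Once the pair $(\alpha, \beta)$ and the element $g$ are in place, the Frobenius-rescaling argument and the deduction of (1)--(3) from the almost isomorphism with a free module are essentially formal.
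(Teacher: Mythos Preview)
Your proposal is correct and follows essentially the same route as the paper: produce an $A$-linear inclusion $A^{\oplus m} \hookrightarrow R$ with cokernel annihilated by some nonzero $g \in A$, apply inverse Frobenius iterates to obtain the $(g)^{1/p^\infty}$-almost isomorphism $A_{\perf}^{\oplus m} \to R_{\perf}$, and deduce (1)--(3) via the faithful flatness of $A \to A_{\perf}$. The paper's version is marginally more streamlined in that it works directly with the single injection and its almost-zero cokernel rather than the symmetric pair $(\alpha,\beta)$, and it does not invoke trace pairings or separability at all---generic freeness of a torsion-free finite module over a domain already gives the needed $g$, so your concern about absorbing inseparability via Frobenius twists is unnecessary.
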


\begin{proof}
    By \cite[Lemma 6.15 (a)]{hochster1990Tight} (or the second paragraph of the proof of \cite[Theorem 8.4]{dietz2007Big}), there exists an integer \(e \geq 0\) such that \(A^{1/p^e} \hookrightarrow R[A^{1/p^e}]\) is generically (finite) \'etale.
    Then there exists a nonzero non-unit element \(x' \in A^{1/p^e}\) such that \(A^{1/p^e}[1/x'] \hookrightarrow R[A^{1/p^e}][1/x']\) is finite \'etale.
    Since $x' \in A^{1/p^e}$, we have $(x')^{p^e} \in A$ and replacing $x'$ with $(x')^{p^n}$ if necessary, we can assume that $x' \in A$. Then there exists $x \in A$ such that $x' = (t_1)^m x$, where $m \in \setZ_{\geq 0}$ and $x \in A \setminus t_1 A$. Since $t_1 A$ is a prime ideal of \(A\), it follows that $t_1, x$ is a regular sequence in $A$ and \(A^{1/p^e}[1/t_1x] \hookrightarrow R[A^{1/p^e}][1/t_1x]\) is finite \'etale.
    Taking perfection, we can show that
    \begin{equation*}
        A_{\perf}[1/t_1x] \cong (A^{1/p^e}[1/t_1x])_{\perf} \hookrightarrow (R[A^{1/p^e}][1/t_1x])_{\perf} \cong R_{\perf}[1/t_1x]
    \end{equation*}
    is finite \'etale.
    By the almost purity theorem in positive characteristic (for example, \cite[Proposition 4.3.4]{bhatt2017Lecture}), the extension \(A_{\perf} \hookrightarrow R_{\perf}\) is \((t_1x)^{1/p^\infty}\)-almost finite \'etale.
    In particular, \(R_{\perf}\) is \((t_1x)^{1/p^\infty}\)-almost flat and \((t_1x)^{1/p^\infty}\)-almost faithful over \(A_{\perf}\) (see \cite[Remarques 1.8.1 (1)]{andre2018Lemme}).
    Since $A \hookrightarrow A_{\perf}$ is faithfully flat, $R_{\perf}$ is a $(t_1x)^{1/p^\infty}$-almost flat and \((t_1x)^{1/p^\infty}\)-almost faithful $A$-algebra.
    By definition, almost flat and almost faithful property over \(A\) implies almost Cohen-Macaulay property and this completes the proof.
\end{proof}

\section{Completion and Integral Extensions of Perfect Rings}

We examine some perfect(oid) assumptions under which a certin integral extension stays integral after completion. First we record some lemmas.
The folloing one is a similar result to \Cref{integralExt1} but for perfect rings.

\begin{lemma}
\label{AlmostSurjectiveIntClosure}
Let \(A\) be a perfect ring of characteristic $p>0$ and let \(t\) be a non-zero-divisor of \(A\). For any integral extension \(A \hookrightarrow B\) such that $A[1/t]=B[1/t]$, the \(t\)-adic completion \(\widehat{A} \to \widehat{B}\) is (honest) injective and \((t)^{1/p^\infty}\)-almost surjective.
\end{lemma}

\begin{proof}
    This proof proceeds as in the proof of \cite[Proposition 4.3.4]{bhatt2017Lecture}: for any \(f \in B\), the finitely generated \(A\)-submodule in \(A[1/t]\) spanned by \(\{f, f^2, \dots, f^N, \dots\}\) is contained in \(t^{-c}A\) for some \(c > 0\).
    So we have \(t^cf^{p^n} \in A\) for any \(n \geq 0\) and thus \(t^{c/p^n}f \in A\) since \(A\) is perfect. This shows that the inclusion map \(A \hookrightarrow B\) is \((t)^{1/p^\infty}\)-almost isomorphism.
    By \cite[Lemma 2.3]{nakazato2023Variant} and the \(t\)-torson-freeness of \(\widehat{A}\), its \(t\)-adic completion \(\widehat{A} \hookrightarrow \widehat{B}\) is injective and \((t)^{1/p^\infty}\)-almost surjective.
\end{proof}

The following lemma is a stability of regular sequences under the normalization of finite \'etale extensions.

\begin{lemma}
\label{StabilityRegSeq}
Let \(A\) be a normal domain over a field \(K\) and let \(t, x\) be a regular sequence of \(A\). Assume that \(A\) is \(t\)-adically Zariskian. Take an integral extension \(A \hookrightarrow B\) of integral domains. If \(A \hookrightarrow B\) becomes finite \'etale after inverting \(tx\), then \(t, x\) is a regular sequence of \(\widetilde{B}\), where \(\widetilde{B}\) is the integral closure of \(B\) in \(B[1/tx]\).
\end{lemma}

\begin{proof}
    % If \(x\) is a unit of \(A\), then the conclusion is trivial. So we may assume that \(x\) is not a unit of \(A\). 
    % In particular, \(A/(t, x)A\) is a non-zero ring because of the \(t\)-Zariskian property of \(A\).
    Since \(A \hookrightarrow \widetilde{B}\) is an integral extension, there exists a prime ideal of \(\widetilde{B}\) such that its contraction to \(A\) contains \((t, x)A\) because of \(A/(t, x)A \neq 0\). This shows that \(\widetilde{B}/(t, x)\widetilde{B} \neq 0\).
    Since \(A[1/tx]\) is a normal domain and \(A[1/tx] \hookrightarrow B[1/tx]\) is a finite \'etale extension, the localization \(B[1/tx]\) is also normal and thus so is \(\widetilde{B}\).
    
    % We claim that \(K[T, X] \to A\) which sends \(T \mapsto t\) and \(X \mapsto x\) is an injective map: let \(f(T, X) \in K[T, X]\) be a polynomial such that \(f(t, x) = 0\) in \(A\). Write \(f(T, X)\) as
    % \begin{equation*}
    %     f(T, X) = f_n(T)X^n + \cdots + f_1(T)X + f_0(T) \in K[T, X]
    % \end{equation*}
    % where \(f_i(T) \in K[T]\) for each \(i\).
    % Since \(f(t, x) = 0\) in \(A\), we have \(f_n(t)x^n + \cdots + f_1(t)x + f_0(t) = 0\) in \(A\). By taking modulo \((t, x)\), we have an injection \(K \hookrightarrow A/(t, x)A\) and thus \(0 = f(t, x) \equiv f_0(t) \equiv f_0(0)\) modulo \((t, x)A\). This means that \(f_0(T)\) has no constant term and thus we have \(f_n(t)x^{n-1} + \cdots + f_2(t)x + f_1(t) \in tA\) since \(x\) is a non-zero-divisor of \(A/tA\). Similarly, \(f_i(T)\) has no constant term and thus \(f(T, X)\) can be divided by \(T\) in \(K[T, X]\). Since \(t\) is a non-zero-divisor of \(A\), we have \(f(T, X) = Xg(T, X)\) for some \(g(T, X) \in K[T, X]\) such that \(g(t, x) = 0\) in \(A\). By repeating this process, we get \(f(T, X) = 0\) in \(K[T, X]\).
    % So we have shown that the map \(K[T, X] \to A\) is injective.
 
Take a map from the polynomial algebra \(K[T, X]\) to \(A\) by \(T \mapsto t\) and \(X \mapsto x\). Then we can write \(A\) as a filtered colimit of finite-type \(K[T, X]\)-algebras. Since \(A\) is normal, it can be rewritten as a filtered colimit: \(A = \colim_{\lambda} A_{\lambda}\) such that $A_\lambda$ is a finite-type normal \(K[T, X]\)-algebra contained in $A$ (see \citeSta{0335}). First we show that \(t, x\) is a regular sequence on \(A_{\lambda}\) for each \(\lambda\): It suffices to show that for each prime ideal $\mfrakp$ of $A_\lambda$ such that $(t,x) \subseteq \mfrakp$, the sequence $t,x$ is regular on the localization $(A_\lambda)_\mfrakp$ with respect to the multiplicative set $A_\lambda \setminus \mfrakp$. Then we have a ring map $(A_\lambda)_\mfrakp \to A_\mfrakp:=A \otimes_{A_\lambda} (A_{\lambda})_\mfrakp$ and $(A_\lambda)_\mfrakp$ is an excellent normal loca domain. In particular, $(A_{\lambda})_\mfrakp$ is an equi-dimensional local domain.
By Serre's normality criterion, it suffices to see that $t,x$ forms part of a system of parameters of $(A_\lambda)_\mfrakp$. To get a contradiction, suppose that there is a prime ideal $\mfrakq \subseteq (A_{\lambda})_\mfrakp$ that is minimal over $t(A_{\lambda})_\mfrakp$ such that $x \in \mfrakq$ in $(A_{\lambda})_\mfrakp$. Then $\mfrakq$ has height one. Consider the localization map $(A_\lambda)_\mfrakq \to A_\mfrakq:=A \otimes_{A_\lambda} (A_{\lambda})_\mfrakq$. Then $t,x$ remains a regular sequence on $A_\mfrakq$. Then we can find a prime ideal $\mfrakP \subseteq A_\mfrakq$ such that $t \in \mfrakP$, but $x \notin \mfrakP$. Pulling this prime ideal back to $A_\mfrakq$, we get a chain of prime ideals
$$
(0) \subsetneq (A_\lambda)_\mfrakq \cap \mfrakP \subsetneq \mfrakq A_\mfrakq.
$$
However, this is impossible because $(A_\lambda)_\mfrakq$ is a discrete valuation ring. Hence we proved that $t,x$ is a regular sequence on $A_\lambda$ for each $\lambda$.

%any associated prime \(\mfrakp\) of \(tA_{\lambda}\) has height one. Taking the localization \((A_{\lambda})_{\mfrakp} \hookrightarrow A_{\mfrakp}\) by the multiplicative closed subset \(A_{\lambda} \setminus \mfrakp\), we know that \(t\) is non-unit of the source. Then any associated prime \(\mfrakq\) of \(tA_{\mfrakp}\) satisfies \(x \notin \mfrakq\) since \(t, x\) is a regular sequence on \(A\).
%Since \((A_{\lambda})_{\mfrakp}\) is a discrete valuation ring, the contraction \(\mfrakq \cap (A_{\lambda})_{\mfrakp}\) which contains \(t\) should be the maximal ideal \(\mfrakp(A_{\lambda})_{\mfrakp}\). Consequently, the condition \(x \notin \mfrakq\) implies that \(x \notin \mfrakp\) and thus \(t, x\) is a regular sequence on \(A_{\lambda}\) for each \(\lambda\).

Since \(A \hookrightarrow \widetilde{B}\) is an integral extension, we can write \(\widetilde{B}\) as a filtered colimit \(\widetilde{B} = \colim_{\mu, \lambda} \widetilde{B}_{\lambda, \mu}\) of normal module-finite sub-\(A_{\lambda}\)-algebras \(\{\widetilde{B}_{\lambda, \mu}\}_{\mu}\) as above for each \(\lambda\). Any associated prime \(\mfrakq\) of \(t\widetilde{B}_{\lambda, \mu}\) has height one and its contraction \(\mfrakq \cap A_{\lambda}\) is also a prime ideal of height one and contains \(t\) since this inclusion \(A_{\lambda} \hookrightarrow \widetilde{B}_{\lambda, \mu}\) has the going-down property.
    Since \(t, x\) is a regular sequence on \(A_{\lambda}\) for each \(\lambda\), the height one prime ideal \(\mfrakq \cap A_{\lambda}\) does not contain \(x\) and so does \(\mfrakq\). Therefore, \(t, x\) form a regular sequence of \(\widetilde{B}_{\lambda, \mu}\) for any \(\mu\) and \(\lambda\). This completes the proof.
\end{proof}

\begin{theorem}
\label{completionintegral}
Let \(A \to B\) be a ring map between perfect rings in positive characteristic \(p>0\) and let \(t\) and \(x\) be non-zero-divisors of \(A\). Suppose the following conditions.
\begin{enumerate}
\item
\(t, x\) is a regular sequence of \(A\). \label{regularseq}

\item
\(A\) is a normal domain. \label{normal}

\item
\(A\) is \(t\)-adically Zariskian. \label{zariskian}

\item
\(t\) and \(x\) are non-zero-divisors of \(B\). \label{nonzerodivisor}
        % \item \(B\) is integrally closed in \(B[1/t]\). \label{integrallyclosed}
\item
\(A \to B\) is an integral extension. \label{integral}

\item
\(A[1/tx] \to B[1/tx]\) is finite \'etale. \label{finiteetale}
\end{enumerate}
Set \(B^+_{B[1/t]}\) for the integral closure of \(B\) in \(B[1/t]\).
Then the \(t\)-adic completion of \(A \hookrightarrow B \hookrightarrow B^+_{B[1/t]}\) extends to an injection \(\widehat{A} \hookrightarrow \widehat{B^+_{B[1/t]}}\), which factors through \(\widehat{A} \hookrightarrow \widehat{A}^+_{\widehat{B^+_{B[1/t]}}} \hookrightarrow \widehat{B^+_{B[1/t]}}\) and the inclusion \(\widehat{A}^+_{\widehat{B^+_{B[1/t]}}} \hookrightarrow \widehat{B^+_{B[1/t]}}\) is \((x)^{1/p^\infty}\)-almost surjective, where \(\widehat{A}^+_{\widehat{B^+_{B[1/t]}}}\) is the integral closure of \(\widehat{A}\) in \(\widehat{B^+_{B[1/t]}}\). Moreover, \(\widehat{A}^+_{\widehat{B^+_{B[1/t]}}}\) and \(\widehat{B}\) are \((x)^{1/p^\infty}\)-almost isomorphic.
\end{theorem}

\begin{proof}
    Once we have shown the \((x)^{1/p^\infty}\)-almost surjectivity of \(\widehat{A}^+_{\widehat{B^+_{B[1/t]}}} \hookrightarrow \widehat{B^+_{B[1/t]}}\), we can conclude the last assertion from \Cref{AlmostSurjectiveIntClosure}. Moreover, we may assume that \(B\) is integrally closed in \(B[1/t]\) by \Cref{AlmostSurjectiveIntClosure}, namely, \(B = B^+_{B[1/t]}\).

    By \Cref{StabilityRegSeq}, $t, x$ forms a regular sequence on the integral closure $\widetilde{B}$ of $A$ in $B[1/tx]$. Now $B \to \widetilde{B}$ is injective by (\ref{nonzerodivisor}).
    Applying \Cref{integralExt1} for $B \hookrightarrow \widetilde{B}$ and $t \in \widetilde{B}$, we get a commutative diagram
    \begin{center}
    \begin{tikzcd}
    B/tB \arrow[r, hook] \arrow[d, "\times x"] & \widetilde{B}/t\widetilde{B} \arrow[d, "\times x", hook] \\
    B/tB \arrow[r, hook]   & \widetilde{B}/t\widetilde{B}
    \end{tikzcd}
    \end{center}
    % \begin{equation*}
    %     \xymatrix{
    %         B/tB  \ar[d]^-{\times x} \ar@{^{(}->}[r] & \widetilde{B}/t\widetilde{B} \ar[d]^-{\times x} \\
    %         B/tB \ar@{^{(}->}[r] & \widetilde{B}/t\widetilde{B}
    %     }
    % \end{equation*}
    and then $t, x$ also forms a regular sequence on $B$.

    The following proof is taken from \cite[Theorem 5.20]{nakazato2023Variant}. Taking a Galois covering of the finite \'etale map $A[1/tx] \hookrightarrow B[1/tx]$ by \cite[Lemma 9.4]{nakazato2023Variant}, there exists an extension
    \begin{equation*}
    A[1/tx] \hookrightarrow B[1/tx] \hookrightarrow C',
    \end{equation*}
    where $A[1/tx] \hookrightarrow C'$ is a $G$-Galois covering for some finite group $G$. In particular, since $(C')^G = A[1/tx]$, the composite map $A[1/tx] \hookrightarrow C'$ is an integral extension. Let $C$ be the integral closure of $A$ in $C'$. Since $C[1/tx]$ is the integral closure of $A[1/tx]$ in $C'[1/tx] = C'$ and $A[1/tx] \hookrightarrow C'$ is integral, we have $C'=C[1/tx]$. Note that $C$ is integrally closed in $C[1/tx]$. By using \Cref{StabilityRegSeq}, \(t, x\) is also a regular sequence on \(C\).
    Now the following commutative diagram holds:
    \begin{center}
        \begin{tikzcd}
            {A[1/tx]} \arrow[rr, "G\text{-Galois}", hook]  &   & {C[1/tx]} \\
            A \arrow[rd, "\text{integral}"', hook] \arrow[rr, "\text{integral}", hook] \arrow[u, "\text{int.cls}"', hook] &   & C \arrow[u, "\text{int.cls}", hook] \\
            & B \arrow[ru, "\text{integral}"', hook] & 
        \end{tikzcd}
    \end{center}
    % \begin{equation*}
    %     \xymatrix{
    %           & A[1/tx] \ar@{^{(}->}[rr]^-{G\text{-Galois}} &       & C[1/tx]   \\
    %         R \ar[r]^-{\text{integral}} & A \ar@{^{(}->}[rr]^-{\text{integral}} \ar@{^{(}->}[rd]_-{\text{integral}} \ar@{^{(}->}[u]^-{\text{int.cls}}    &       & C \ar@{^{(}->}[u]^-{\text{int.cls}}        \\
    %           &         & B \ar@{^{(}->}[ru]_-{\text{integral}}    &           \\
    %     }
    % \end{equation*}
    where the injection $B \hookrightarrow C$ is induced from the integrality of $A \hookrightarrow B$ and injectivity of $B \hookrightarrow B[1/tx] \hookrightarrow C'=C[1/tx]$. Now since $t$ and $x$ are invertible in $C' = C[1/tx]$, these elements are non-zero-divisors in $C$. Since \(A\) is normal (\ref{normal}) and \(B\) is integrally closed in \(B[1/t]\), we can use \Cref{integralExt1} to the integral extensions $A \hookrightarrow C$ and $B \hookrightarrow C$ and it follows that the $t$-adic completion maps $\widehat{A} \to \widehat{C}$ and $\widehat{B} \to \widehat{C}$ are injective. In particular, this show the first assertion that the $t$-adic completion of \(A \hookrightarrow B\) extends to an injection \(\widehat{A} \hookrightarrow \widehat{B^+_{B[1/t]}}\).

    Since $G$ acts on $\widehat{C} = \varprojlim_{n > 0} C/t^nC$ and $\widehat{A} = \varprojlim_{n > 0} A/t^nA$ in a componentwise manner, we have an injective map $\widehat{A} \hookrightarrow (\widehat{C})^G$. We first show that the inclusion map $\widehat{A} \hookrightarrow (\widehat{C})^G$ is $(x)^{1/p^\infty}$-almost surjective.

    By the hypothesis (\ref{zariskian}) and the integrality of the extension $A \hookrightarrow C$, $C$ is also $t$-adically Zariskian. This shows that $A$ and $C$ satisfy the hypotheses of Zariskian Riemann's extension theorem \cite[Theorem 5.11]{nakazato2023Variant} for the elements $t, x$.
    Let $A^j$ and $C^j$ be the Tate ring associated to $\big(A[t^j/x] , (t)\big)$ and $\big(C[t^j/x] , (t)\big)$, respectively. Then Zariskian Riemann's extension theorem and integral closedness of $A$ and $C$ show that the canonical map \(A^+_{A[1/tx]} \to A^{j \circ}\) which is compatible with \(G\)-action induces an isomorphism
    % \begin{align}
    % A & = A^+_{A[1/tx]} \xrightarrow{\cong} \varprojlim_{j > 0} A^{j \circ} \quad \text{and} \\
    % C & = C^+_{C[1/tx]} \xrightarrow{\cong} \varprojlim_{j > 0} C^{j \circ}.
    % \end{align}
    \begin{equation} \label{ZariskiRiemannExt}
        A = A^+_{A[1/tx]} \xrightarrow{\cong} \varprojlim_{j > 0} A^{j \circ} \quad \text{and} \quad C = C^+_{C[1/tx]} \xrightarrow{\cong} \varprojlim_{j > 0} C^{j \circ}.
    \end{equation}

    By hypotheses of \Cref{completionintegral} (\ref{normal}), $A$ is integrally closed in $A[1/t]$. Since $C$ is the integral closure of \(A\) in $C' = C[1/tx]$, \(C\) is also integrally closed in $C[1/t]$. Furthermore, since $A$ and $C$ are integral over a Noetherian ring $R$, one applies \cite[Proposition 7.1]{nakazato2023Variant} to show that $A$ and $C$ are completely integrally closed in $A[1/t]$ and $C[1/t]$, respectively.
    Note that $A[1/tx] \hookrightarrow C[1/tx]$ is a $G$-Galois covering and \(A[1/tx]\) is perfect, so it is finite \'etale by \cite[Proposition 9.3 (1)]{nakazato2023Variant} and thus $C[1/tx]$ is perfect by \cite[Lemma 4.3.8]{bhatt2017Lecture}. Since $C$ is integrally closed in $C[1/tx]$, $C$ is also perfect. Therefore, we can apply (\ref{CompletionLimit}) in \Cref{AppWitt-perfectRiemannExt} to the perfect rings $A$ and $C$ and the elements $t, x$ to get injective and \((x)^{1/p^\infty}\)-almost surjective maps
    \begin{equation} \label{almostisoring}
        \widehat{\varprojlim_{j > 0} A^{j \circ}}  \hookrightarrow \varprojlim_{j > 0} \widehat{A^{j \circ}} \quad \text{and} \quad
        \widehat{\varprojlim_{j > 0} C^{j \circ}}  \hookrightarrow \varprojlim_{j > 0} \widehat{C^{j \circ}}.
\end{equation}

Then these yield a commutative diagram
    \begin{center}
        \begin{tikzcd}
            \left(\widehat{C}\right)^G \arrow[rr, "\cong"]   &  & \left(\widehat{\varprojlim_{j > 0} C^{j \circ}}\right)^G \arrow[rrr, "(x)^{1/p^\infty} \text{-almost surj}", hook] &  &  & \left(\varprojlim_{j > 0} \widehat{C^{j \circ}}\right)^G \arrow[r, Rightarrow, no head] & \varprojlim_{j > 0} \left(\widehat{C^{j \circ}}\right)^G \\
            \widehat{A} \arrow[u, hook'] \arrow[rr, "\cong"] &  & \widehat{\varprojlim_{j > 0} A^{j \circ}} \arrow[rrrr, "(x)^{1/p^\infty} \text{-almost surj}", hook] \arrow[u]   &  &  &  & \varprojlim_{j > 0} \widehat{A^{j \circ}} \arrow[u, "\cong"']
        \end{tikzcd}
    \end{center}
    where the first and second vertical maps are induced by $A^{j \circ} \hookrightarrow (C^{j \circ})^G$, the right-most vertical isomorphism is deduced from \Cref{GaloisStableAj}, and the middle horizontal maps are injective and $(x)^{1/p^\infty}$-almost surjective by (\ref{almostisoring}) above.
    Chasing the diagram, we can see that the left vertical map $\widehat{A} \hookrightarrow (\widehat{C})^G$ is $(x)^{1/p^\infty}$-almost surjective.
    % For any $\gamma \in (\widehat{C})^G$, let $\gamma'$ be the image of $\gamma$ via $(\widehat{C})^G \hookrightarrow \varprojlim_{j > 0} (\widehat{C^{j \circ}})^G$. By $(x)^{1/p^\infty}$-almost surjectivity of $\widehat{A} \hookrightarrow \varprojlim_{j > 0} (\widehat{A^{j \circ}})^G$, for every $k \geq 0$, there exists some $\alpha_k \in \widehat{A}$ such that
    % \begin{equation*}
    % \widehat{A} \ni \alpha_k \longmapsto x^{1/p^k} \gamma' \in \varprojlim_{j > 0} (\widehat{C^{j \circ}})^G.
    % \end{equation*}
    % Since $(\widehat{C})^G \hookrightarrow \varprojlim_{j > 0} (\widehat{C^{j \circ}})^G$ is injective, $\alpha_k$ maps to $x^{1/p^k} \gamma \in (\widehat{C})^G$ and so $\widehat{A} \hookrightarrow (\widehat{C})^G$ is $(x)^{1/p^\infty}$-almost surjective.
    So we get a commutative diagram
    \begin{center}
        \begin{tikzcd}
            \widehat{A} \arrow[rrrd, hook] \arrow[rrrr, "(x)^{1/p^\infty}\text{-almost surj}", hook] &  &  & & \widehat{C}^G \arrow[rr, "\text{integral}", hook] &  & \widehat{C} \\
            &  &  & \widehat{B}. \arrow[rrru, hook] &   &  & 
        \end{tikzcd}
    \end{center}
    % \begin{equation*}
    %     \xymatrix{
    %         \widehat{A} \ar@{^{(}->}[rrrd] \ar@{^{(}->}[rrrr]^-{(x)^{1/p^\infty}\text{-almost surj}} &  &  & & \widehat{C}^G \ar@{^{(}->}[rr]^-{\text{integral}} &  & \widehat{C} \\
    %         &  &  & \widehat{B}. \ar@{^{(}->}[rrru] &   &  & 
    %     }
    % \end{equation*}
    For any $b \in \widehat{B}$, there exists a set of elements $c_0, \dots, c_{n-1} \in (\widehat{C})^G$ such that
    \begin{equation*}
        b^n + c_{n-1} b^{n-1} + \dots + c_1 b + c_0 = 0~\mbox{in}~\widehat{C}.
    \end{equation*}
    For every $k \geq 0$, multiplying by $x^{n/p^k} \in \widehat{A}$, we have
    \begin{equation*}
    (x^{1/p^k} b)^n + (c_{n-1} x^{1/p^k}) (x^{1/p^k} b)^{n-1} + \dots + (c_1 x^{(n-1)/p^k}) (x^{1/p^k} b) + (c_0 x^{n/p^k}) = 0~\mbox{in}~\widehat{C}.
    \end{equation*}
    By $(x)^{1/p^\infty}$-almost surjectivity of $\widehat{A} \hookrightarrow (\widehat{C})^G$, there exist $a_{n-i}$ in $\widehat{A}$ such that
    $c_{n-i} x^{i/p^k} \in \widehat{C}^G$ is the image of $a_{n-i}$ via $\widehat{A} \hookrightarrow \widehat{C}^G$. Since $\widehat{B} \hookrightarrow \widehat{C}$ is injective, we have
    \begin{equation*}
    (x^{1/p^k} b)^n + a_{n-1} (x^{1/p^k} b) + \dots + a_1 (x^{1/p^k} b) + a_0 = 0~\mbox{in}~\widehat{B}.
    \end{equation*}
    Then $x^{1/p^k} b$ is integral over $\widehat{A}$ for any $k \geq 0$, in particular, $x^{1/p^k} b$ is in $\widehat{A}^+_{\widehat{B}}$, the integral closure of $\widehat{A}$ in $\widehat{B}$. This shows that the inclusion map $\widehat{A}^+_{\widehat{B}} \hookrightarrow \widehat{B}$ is $(x)^{1/p^\infty}$-almost surjective.
\end{proof}

\begin{corollary}
\label{AlmostInjectivePerfectClosure}
    Let \(A \to B\) be a ring map between perfect rings of positive characteristic \(p>0\) and let \(t\) and \(x\) be non-zero-divisors of \(A\).
    Suppose that they satisfy the conditions (\ref{regularseq})--(\ref{finiteetale}) in \Cref{completionintegral}. Set \(\widehat{A}^+_{\widehat{B^+_{B[1/t]}}}\) to be the integral closure of \(\widehat{A}\) in \(\widehat{B^+_{B[1/t]}}\).
    Take a perfect \(B^+_{B[1/t]}\)-algebra \(C\) and take the \(t\)-adic completion \(\widehat{A} \to \widehat{B} \to \widehat{B^+_{B[1/t]}} \to \widehat{C}\).
    If \(\widehat{C}\) is an integral domain and the composite map \(\widehat{A} \to \widehat{C}\) is injective, then the map \(\widehat{B} \to \widehat{C}\) is injective.
\end{corollary}

\begin{proof}
    By \Cref{completionintegral}, our assumption gives rise to the following commutative diagram:
    \begin{center}
        \begin{tikzcd}
            \widehat{A} \arrow[rr, "\text{integral}", hook] \arrow[rrrrrd, hook] &  & \widehat{A}^+_{\widehat{B^+_{B[1/t]}}} \arrow[rrr, "(x)^{1/p^\infty}\text{-almost surj}", hook] &  &  & \widehat{B^+_{B[1/t]}} \arrow[d] \\
                                                                                 &  &                                                                                      &  &  & \widehat{C}          
        \end{tikzcd}
    \end{center}
    Since \(\widehat{C}\) is an integral domain, the kernel of the map \(\widehat{A}^+_{\widehat{B^+_{B[1/t]}}} \to \widehat{C}\) is a prime ideal of \(\widehat{A}^+_{\widehat{B^+_{B[1/t]}}}\) whose contraction to \(\widehat{A}\) is zero.
    This implies that the kernel is zero by integrality of the extension, and hence the map \(\widehat{A}^+_{\widehat{B^+_{B[1/t]}}} \hookrightarrow \widehat{C}\) is injective.
    Since \(\widehat{A}^+_{\widehat{B^+_{B[1/t]}}} \hookrightarrow \widehat{B^+_{B[1/t]}}\) is a \((x)^{1/p^\infty}\)-almost isomorphism, the map \(\widehat{B^+_{B[1/t]}} \to \widehat{C}\) is \((x)^{1/p^\infty}\)-almost injective.
    Now, as in the proof of \Cref{completionintegral}, \(t, x\) is a regular sequence on \(B^+_{B[1/t]}\) and then \(x\) is also a non-zero-divisor of \(\widehat{B^+_{B[1/t]}}\). So the map \(\widehat{B^+_{B[1/t]}} \to \widehat{C}\) is an (honest) injective map. Combining this fact together with \Cref{AlmostSurjectiveIntClosure}, we conclude that \(\widehat{B} \to \widehat{C}\) is injective.
\end{proof}

\begin{lemma}
\label{NoetherianCohenAssumptions}
Let \(R\) be a complete Noetherian local domain of characteristic \(p>0\) with perfect residue field \(k\). Fix a module-finite extension \(A \defeq k[|t_1, \dots, t_d|] \hookrightarrow R\) taken by Cohen's structure theorem where \(d \defeq \dim(R)\).
Then there exists a non-zero element \(x \in A\) such that the inclusion map \(A_{\perf} \hookrightarrow R_{\perf}\) satisfy the conditions (\ref{regularseq})--(\ref{finiteetale}) in \Cref{completionintegral} for this \(t \defeq t_1\) and \(x\). In particular, the conclusion of \Cref{completionintegral} and \Cref{AlmostInjectivePerfectClosure} holds.
\end{lemma}

\begin{proof}
    First, we note that the perfection \(A_{\perf}\) is the filtered colimit of complete regular local rings, and hence the conditions (\ref{normal}) and (\ref{zariskian}) in \Cref{completionintegral} are satisfied.
    By construction, the map \(A_{\perf} \hookrightarrow R_{\perf}\) is integral, and thus (\ref{integral}) holds.
    For (\ref{regularseq}), (\ref{nonzerodivisor}), and (\ref{finiteetale}), it is enough to show that there exists a non-zero element \(x \in A\) such that the map \(A_{\perf}[1/t_1x] \to R_{\perf}[1/t_1x]\) is finite \'etale and \(t_1, x\) is a regular sequence of \(A_{\perf}\).
    However, this has already been shown in \Cref{RobertsSinghSrinivas}.
\end{proof}

\begin{corollary}
\label{InjectivePerfectClosure}
    Let \(R\) be a complete Noetherian local domain of characteristic \(p>0\) with perfect residue field \(k\).
    Fix a module-finite extension \(A \defeq k[|t_1, \dots, t_d|] \hookrightarrow R\) taken by Cohen's structure theorem where \(d \defeq \dim(R)\). Let \(C\) be a perfect \(R\)-algebra domain such that \(C\) is integrally closed in \(C[1/t_1]\).
    Assume that the composite map \(A \hookrightarrow R \to C\) is injective and \(\widehat{C}\) is an integral domain.
    Then the \(t_1\)-adic completion map \(\widehat{R_{\perf}} \to \widehat{C}\) is injective.
\end{corollary}

\begin{proof}
    Since \(C\) is a perfect \(R\)-algebra and integrally closed in \(C[1/t_1]\), \(C\) is a perfect \((R_{\perf})^+_{R_{\perf}[1/t_1]}\)-algebra, where \((R_{\perf})^+_{R_{\perf}[1/t_1]}\) is the integral closure of \(R_{\perf}\) in \(R_{\perf}[1/t_1]\).
    By \Cref{AlmostInjectivePerfectClosure} and \Cref{NoetherianCohenAssumptions}, it is sufficient to show that the map \(\widehat{A_{\perf}} \to \widehat{C}\) is injective. 

    Since \(t_1\) is a prime element of \(A\), the \(p^n\)-th root \(t_1^{1/p^n}\) of \(t_1\) is a prime element of \(A^{1/p^n}\). Set a prime ideal \(\mfrakp_n \defeq t_1^{1/p^n}A^{1/p^n}\) of \(A^{1/p^n}\) for each \(n \geq 0\).
    Since \(\mfrakp_n^{kp^n}\) is a \(\mfrakp_n\)-primary ideal of \(A^{1/p^n}\), the \(kp^n\)-th symbolic power \(\mfrakp_n^{(kp^n)}\) of \(\mfrakp_n\) is equal to the \(kp^n\)-th ordinary power \(\mfrakp_n^{kp^n}\) (see, for example, \cite[Exercise 4.13 (iv)]{atiyah1994Introduction}). So we get \(t_1^kA^{1/p^n}_{\mfrakp_n} \cap A^{1/p^n} = t_1^kA^{1/p^n}\) for any \(n \geq 0\).
    In particular, we have
    \begin{equation*}
        t_1^k(A_{\perf})_{\mfrakp_{\infty}} \cap A_{\perf} = t_1^kA_{\perf}
    \end{equation*}
    where the (height one) prime ideal \(\mfrakp_{\infty}\) of \(A_{\perf}\) is the union of \(\mfrakp_n\) for all \(n \geq 0\).
    This means that the map \(\widehat{A_{\perf}} \to \widehat{(A_{\perf})_{\mfrakp_{\infty}}}\) is injective.

    Since \(\widehat{C}\) is an integral domain, \(t_1\) is not a unit element of \(C\) and there exists a prime ideal \(\mfrakq\) of \(C\) such that \(t_1 \in \mfrakq\).
    In the fraction field of \(C\), we have injective maps \((A_{\perf})_{\mfrakp_{\infty}} \hookrightarrow (A_{\perf})_{\mfrakq \cap A_{\perf}} \hookrightarrow C_{\mfrakq}\).
    Note that \((A_{\perf})_{\mfrakp_{\infty}}\) is a valuation ring and thus the injective map \((A_{\perf})_{\mfrakp_{\infty}} \hookrightarrow C_{\mfrakq}\) is faithfully flat. This implies that
    \begin{equation*}
        t_1^k(A_{\perf})_{\mfrakp_{\infty}} = t_1^kC_{\mfrakq} \cap (A_{\perf})_{\mfrakp_{\infty}}
    \end{equation*}
    for any \(k \geq 0\).
    This means that the map \(\widehat{(A_{\perf})_{\mfrakp_{\infty}}} \to \widehat{C_{\mfrakq}}\) is injective. Notice that the composite map \(\widehat{A_{\perf}} \to \widehat{C} \to \widehat{C_{\mfrakq}}\) is equal to the composite map \(\widehat{A_{\perf}} \to \widehat{(A_{\perf})_{\mfrakp_{\infty}}} \to \widehat{C_{\mfrakq}}\), which is injective. This completes the proof.
\end{proof}

\section{Adjoining \(p\)-power Roots in a Noetherian Local Domain}

\begin{construction} \label{ConstRinfty}
    Let $(R, \mfrakm, k)$ be a complete Noetherian local domain with perfect residue field $k$ in mixed characteristic. Fix an absolute integral closure \(R^+\). Then Cohen's structure theorem says that there exists a surjective map:
    \begin{equation} \label{CohenStr}
        S \defeq W(k)[|t_2,\ldots,t_d,t_{d+1},\ldots,t_n|] \twoheadrightarrow R.
    \end{equation}
    Here, we assume that the images of $p, t_2, \ldots, t_d$ in $R$ form a system of parameters of $R$. Indeed by using the prime avoidance lemma, we can make a choice of a system of parameters of a local ring. Denote their images in $R$ by $p, x_2, \ldots, x_n$. Notice that it is not necessarily true that $p, x_2, \ldots, x_n$ forms a set of the minimal generators of $\mfrakm$. 

    Let us define
    \begin{equation*}
        R_{\infty} \defeq \bigcup_{j \ge 0} R[p^{1/p^j}, x_2^{1/p^j},\ldots,x_n^{1/p^j}],
    \end{equation*}
    where this construction occur inside a fixed absolute integral closure $R^+$.
    Recall that we denote by $C(R_{\infty})$ the $p$-root closure of $R_{\infty}$ in $R_{\infty}[1/p]$, and denote by $\widetilde{R}_{\infty}$ the integral closure of $R_{\infty}$ in $R_{\infty}[1/p]$. Moreover, \(\widehat{S}_{\infty} \to \widehat{R}_{\infty}\) is surjective by \citeSta{0315}
\end{construction}

The goal is to show that the $p$-adic completion of $C(R_{\infty})$ and $\widetilde{R}_{\infty}$ are perfectoid.

\begin{lemma} \label{PropertiesAdjoining}
    Let the notation be as above. Then the following properties hold.
    \begin{enumerate}
        \item The Frobenius map \(R_{\infty}/pR_{\infty} \xrightarrow{F} R_{\infty}/pR_{\infty}\) on \(R_{\infty}\) is surjective.
        \item The \(p\)-adic completion \(\widehat{C(R_{\infty})}\) is a perfectoid ring.
        \item The inclusion map \(C(R_{\infty}) \hookrightarrow \widetilde{R}_{\infty}\) is \((p)^{1/p^\infty}\)-almost surjective. In particular, the \(p\)-adic completion \(\widehat{C(R_{\infty})} \to \widehat{\widetilde{R}}_{\infty}\) is a \((p)^{1/p^\infty}\)-almost isomorphism.
        \item The \(p\)-adic completion \(\widehat{\widetilde{R}}_{\infty}\) is a perfectoid ring.
    \end{enumerate}
\end{lemma}

\begin{proof}
(1): As was done in the case of $R_{\infty}$, we can construct $S_{\infty}$ from the formal power series ring $S = W(k)[|t_2, \ldots, t_d, t_{d+1}, \ldots, t_n|]$:
\begin{equation*}
    S_{\infty} \defeq \bigcup_{j \ge 0} S[p^{1/p^j}, t_2^{1/p^j},\ldots,t_n^{1/p^j}].
\end{equation*}
Then by (\ref{CohenStr}), there exists a surjective ring map $S_{\infty} \twoheadrightarrow R_{\infty}$. As it is clear that the Frobenius map on $S_{\infty}/pS_{\infty}$ is surjective, the same holds on $R_{\infty}/pR_{\infty}$.

(2): We follow the method of \cite[Proposition 2.1.8]{cesnavicius2024Purity}, which we recall for the convenience of readers. So it suffices to prove that the Frobenius map on \(C(R_{\infty})/pC(R_{\infty})\) is surjective.

Put $\varpi \defeq p^{1/p}$ and $R_{\infty}^0 \defeq R_{\infty}$. Suppose that the $R_{\infty}$-algebra $R_{\infty}^k$ has been constructed. Take an element $a \in R_{\infty}^k$ with $a^p \in pR_{\infty}^k$. Let $\{a_n\}_{n \ge 0}$ be a sequence of elements of $R_{\infty}^k$ such that $a_{n+1}^p \equiv a_n \pmod{pR_{\infty}^k} $ and $a_0 \defeq a$. Set
\begin{equation*}
    R_{\infty}^{k+1} \defeq R_{\infty}^k\big[a_n/\varpi^{1/p^n}~\big|~n \ge 0~\mbox{and}~a^p \in pR_{\infty}^k\big] \subseteq R_{\infty}[1/p].
\end{equation*}
Put $R_{\infty}^\infty \defeq \bigcup_{k \ge 0} R_{\infty}^k$. By construction, the Frobenius map on $R_{\infty}^\infty/pR_{\infty}^\infty$ induces an isomorphism: $R_{\infty}^\infty/\varpi R_{\infty}^\infty \cong R_{\infty}^\infty/pR_{\infty}^\infty$. Together with \(R_{\infty}^\infty = C(R_{\infty})\) (see \cite[(2.1.7.1)]{cesnavicius2024Purity}), this implies that the $p$-adic completion of $R_{\infty}^\infty = C(R_{\infty})$ is perfectoid.

(3): This is a variant of \Cref{integralExt2}. Let $b \in \widetilde{R}_{\infty}$ be an arbitrary element. Since $C(R_{\infty}) \hookrightarrow \widetilde{R}_{\infty}$ is an integral extension and coincides after inverting $p$, there exists $k \in \mathbb{N}$ such that $p^{k}b^n \in C(R_{\infty})$ for any $n \in \setZ_{\geq 0}$. Then we get
\begin{equation*}
    p^kb^p, p^kb^{p^2}, \ldots \in C(R_{\infty})
\end{equation*}
and hence $(p^{k/p^n}b)^{p^n} \in C(R_{\infty})$. By the $p$-root closedness, it follows that $p^{k/p^n}b \in C(R_{\infty})$. Since $k$ is fixed and $n$ is arbitrary, the $(p)^{1/p^\infty}$-almost surjectivity is proved.
Finally, by \cite[Lemma 2.3]{nakazato2022Finite}, the $p$-adic completion of this inclusion map is a $(p)^{1/p^\infty}$-isomorphism.

(4): Since \(\widetilde{R}_{\infty}\) is \(p\)-root closed, it suffices to show that the Frobenius map $\map{\Frob}{\widetilde{R}_{\infty}/p\widetilde{R}_{\infty}}{\widetilde{R}_{\infty}/p\widetilde{R}_{\infty}}$ is surjective. In view of the assertions (2) and (3), the Frobenius map on $\widetilde{R}_{\infty}/p\widetilde{R}_{\infty}$ is $(p)^{1/p^\infty}$-almost surjective. Now we prove that the Frobenius map on $\widetilde{R}_{\infty}/p\widetilde{R}_{\infty}$ is (honestly) surjective. So let $y \in \widetilde{R}_{\infty}$. Then we have $\varpi y = a^p + pb$ for some $a,b \in \widetilde{R}_{\infty}$, where \(\varpi = p^{1/p}\) as above. Set
\begin{equation*}
    z \defeq a/\varpi^{1/p} \in \widetilde{R}_{\infty}[1/p].
\end{equation*}
Then it follows that
\begin{equation*}
    z^p = a^p/\varpi = y-\varpi^{p-1} b \in \widetilde{R}_{\infty}.
\end{equation*}
By the integral closedness of $\widetilde{R}_{\infty}$ in $\widetilde{R}_{\infty}[1/p]$, we get $z \in \widetilde{R}_{\infty}$ and $\varpi y = \varpi z^p + pb = \varpi z^p + \varpi^p b$. By dividing this equation out by a non-zero-divisor $\varpi$, we find that $y = z^p + \varpi^{p-1}b$ and $\map{\Frob}{\widetilde{R}_{\infty}/\varpi^{p-1}\widetilde{R}_{\infty}}{\widetilde{R}_{\infty}/\varpi^{p-1}\widetilde{R}_{\infty}}$ is surjective. Since $\varpi^{p-1} \widetilde{R}_{\infty} \subseteq \varpi \widetilde{R}_{\infty}$ as ideals, we see that $\map{\Frob}{\widetilde{R}_{\infty}/\varpi\widetilde{R}_{\infty}}{\widetilde{R}_{\infty}/\varpi \widetilde{R}_{\infty}}$ is surjective. Consider the commutative diagram with exact rows:
\begin{center}
    \begin{tikzcd}
        0 \arrow[r] & {\widetilde{R}_{\infty}/\varpi^{1/p} \widetilde{R}_{\infty}} \arrow[d, "F_1"] \arrow[r, "\varpi^{(p-1)/p}"] & {\widetilde{R}_{\infty}/\varpi \widetilde{R}_{\infty}} \arrow[d, "F_2"] \arrow[r] & {\widetilde{R}_{\infty}/\varpi^{(p-1)/p} \widetilde{R}_{\infty}} \arrow[d, "F_3"] \arrow[r] & 0 \\
        0 \arrow[r] & {\widetilde{R}_{\infty}/\varpi \widetilde{R}_{\infty}} \arrow[r, "\varpi^{p-1}"] & {\widetilde{R}_{\infty}/p \widetilde{R}_{\infty}} \arrow[r]   & {\widetilde{R}_{\infty}/\varpi^{p-1} \widetilde{R}_{\infty}} \arrow[r]  & 0
    \end{tikzcd}
\end{center}
% \begin{equation*}
%     \xymatrix{
%         0 \ar[r] & {\widetilde{R}_{\infty}/\varpi^{1/p} \widetilde{R}_{\infty}} \ar[d]^-{F_1} \ar[rr]^-{\times \varpi^{(p-1)/p}} & & {\widetilde{R}_{\infty}/\varpi \widetilde{R}_{\infty}} \ar[d]^-{F_2} \ar[r] & {\widetilde{R}_{\infty}/\varpi^{(p-1)/p} \widetilde{R}_{\infty}} \ar[d]^-{F_3} \ar[r] & 0 \\
%         0 \ar[r] & {\widetilde{R}_{\infty}/\varpi \widetilde{R}_{\infty}} \ar[rr]^-{\times \varpi^{p-1}} & & {\widetilde{R}_{\infty}/p \widetilde{R}_{\infty}} \ar[r]   & {\widetilde{R}_{\infty}/\varpi^{p-1} \widetilde{R}_{\infty}} \ar[r]  & 0
%     }
% \end{equation*}
where the vertical map is induced by the Frobenius map. As the maps $F_1$ and $F_3$ are surjective, it follows that $F_2$ is also surjective by the five lemma. So we are done.
\end{proof}

\section{Uniform Completion of Tate Rings}

The purpose of this section is to prove \Cref{maintheorem} (1) (see \Cref{almostsur1}). 
This can be proved by using perfectoidization introduced in \cite{bhatt2022Prismsa} and its representation by using \(p\)-root closure \cite{ishizuka2024Calculationa}. However, we can give a proof of \Cref{maintheorem} (1) by using the uniform completion of Tate rings instead of perfectoidization.
In order to state the proof of using uniform completions, we will first review the concepts related to it. If the readers are only interested in the proof by perfectoidization, they may skip to the end of this section.

Let $A = A_0[1/t]$ be a Tate ring, where $A_0 \subseteq A$ is a ring of definition with a pseudo-uniformizer $t \in A_0$.
We recall the \emph{uniform completion} of a Tate ring.

\begin{definition} \label{Uniform}
    A Tate ring $A$ is \emph{uniform} if the set of all power-bounded elements $A^\circ$ is bounded, or equivalently, some ring of integral elements of $A$ is a ring of definition of $A$ by \cite[Lemma 2.13]{nakazato2022Finite}.
\end{definition}

Any Tate ring can be equipped with the structure as a seminormed ring as follows (see \cite[Definition 2.26]{nakazato2022Finite} for more details).

\begin{definition}
\label{Seminorm}
Let \(A \defeq A_0[1/t]\) be a Tate ring.
Fix a real number \(c > 1\). Then we can define a seminorm \(\norm{\cdot}_{A_0, t, c} \colon A \to \setR_{\geq 0}\) by
\begin{equation*}
\norm{f}_{A_0, t, c} \defeq \inf_{m \in \setZ} \set{c^m}{t^m f \in A_0}.
\end{equation*}
The seminorm defines a seminormed ring \((A, \norm{\cdot}_{A_0, t, c})\).
The topology of the seminormed ring \((A, \norm{\cdot}_{A_0, t, c})\) is equal to the topology of the Tate ring \(A = A_0[1/t]\). In particular, the topology induced from the norm \(\norm{\cdot}_{A_0, t, c}\) does not depend on the choices of \(A_0\), \(t\), and \(c\).
So we write the seminorm \(\norm{\cdot}_{A_0, t, c}\) as \(\norm{\cdot}\) for simplicity.
    
The \emph{spectral seminorm} attached to this seminorm \(\norm{\cdot}\) is defined as
\begin{equation*}
\norm{f}_{\mathrm{sp}} \defeq \lim_{n \to \infty} \norm{f^n}^{1/n}.
\end{equation*}
By Fekete's subadditivity lemma, we have \(\norm{f}_{\mathrm{sp}} \leq \norm{f}\).
\end{definition}

We recall the following correspondence of a complete uniform Tate ring and its spectral seminorm (see \cite[\S 2.4]{nakazato2022Finite}).

\begin{lemma} \label{UniformPropeties}
    Let \(A = A_0[1/t]\) be a Tate ring equipped with a structure of a seminormed ring \((A, \norm{\cdot})\) as above.
    Then $A$ is uniform if and only if the norm \(\norm{\cdot}\) on $A$ is equivalent to the spectral seminorm $\norm{\cdot}_{\mathrm{sp}}$ induced from \(\norm{\cdot}\).

    If $A$ is uniform, the set of all power-bounded elements $A^\circ$ of $A$ is identified with the unit disk $A_{\norm{\cdot}_{\mathrm{sp}} \leq 1}$ with respect to the spectral seminorm in $A$.
\end{lemma}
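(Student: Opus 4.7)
The approach is to compare two filtrations of $A$---the $t$-adic filtration by $A_0$ and the filtration by spectral-seminorm balls---and to identify both with the power-bounded subring $A^\circ$ under the uniform hypothesis.

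First I would record some elementary consequences of the two seminorms. Fekete's subadditivity lemma gives $\|\cdot\|_{\mathrm{sp}} \leq \|\cdot\|$, so the identity $(A,\|\cdot\|) \to (A,\|\cdot\|_{\mathrm{sp}})$ is continuous. A direct computation from the definition of $\|\cdot\|$ gives $\|t^{\pm n}\| = c^{\mp n}$, hence $\|t^{\pm n}\|_{\mathrm{sp}} = c^{\mp n}$; setting $B := A_{\|\cdot\|_{\mathrm{sp}} \leq 1}$, submultiplicativity of $\|\cdot\|_{\mathrm{sp}}$ yields
\begin{equation*}
t^n \cdot B \;=\; A_{\|\cdot\|_{\mathrm{sp}} \leq c^{-n}} \quad (n \geq 0),
\end{equation*}
so the $\|\cdot\|_{\mathrm{sp}}$-topology on $A$ is the linear topology with fundamental system $\{t^n B\}_{n \geq 0}$ of open neighborhoods of $0$.

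Next I would establish the inclusions $A_0 \subseteq A^\circ \subseteq B$ in full generality: the first holds because $A_0$ is itself bounded, the second because any $f \in A^\circ$ has $\sup_n \|f^n\| < \infty$ (from the basis $\{t^k A_0\}_k$), hence $\|f\|_{\mathrm{sp}} \leq 1$ by Fekete. Combining this with the previous step, the Tate topology and the $\|\cdot\|_{\mathrm{sp}}$-topology coincide if and only if the filtrations $\{t^n B\}_n$ and $\{t^n A_0\}_n$ are cofinal, if and only if $t^m B \subseteq A_0$ for some $m \geq 0$, if and only if $B$ is bounded in $(A,\|\cdot\|)$. When this holds, every $f \in B$ has $\{f^n\}_n \subseteq B$ bounded in $A$, so $f \in A^\circ$, giving $B = A^\circ$ and hence part~(2); simultaneously $A^\circ = B$ is bounded, so $A$ is uniform.

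Conversely, when $A$ is uniform, $A^\circ$ itself is a bounded ring of definition of $A$, and one checks that the spectral seminorm $\|\cdot\|_{\mathrm{sp}}$ is intrinsic (independent of the choice of ring of definition or pseudo-uniformizer). Redoing the analysis with $A^\circ$ in place of $A_0$ identifies $B$ with the integral closure of $A^\circ$ in $A$, which equals $A^\circ$ because $A^\circ$ is always integrally closed in $A$; hence $B = A^\circ$ is bounded and the two topologies agree. The main subtle point I expect is precisely this reverse direction---that uniformity forces $\|f\|_{\mathrm{sp}} \leq 1 \Rightarrow f \in A^\circ$---which is not a formal consequence of submultiplicativity of $\|\cdot\|$ alone but does follow once one uses $A^\circ$ as a ring of definition and invokes the integral-closure description of $A^\circ$.
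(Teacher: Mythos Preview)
Your setup is sound: the identification of the $\norm{\cdot}_{\mathrm{sp}}$-topology with the linear topology defined by $\{t^nB\}_{n\ge 0}$, the chain $A_0\subseteq A^\circ\subseteq B$, and the equivalence ``topologies agree $\Leftrightarrow$ $B$ bounded $\Rightarrow$ $B=A^\circ$ and $A$ uniform'' are all correct and give one implication of part~(1) together with part~(2).

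The gap is exactly where you flag it, but your proposed resolution does not work. Replacing $A_0$ by $A^\circ$ and invoking intrinsicality of $\norm{\cdot}_{\mathrm{sp}}$ is fine, but nothing in your earlier analysis identifies $B$ with the \emph{integral} closure of the ring of definition: what you proved is $A_0\subseteq A^\circ\subseteq B$, and redoing this with $A^\circ$ only yields the tautology $A^\circ\subseteq B$. In general $\norm{f}_{\mathrm{sp}}\le 1$ merely says $\norm{f^n}$ grows subexponentially, which neither forces $\{f^n\}$ to be bounded nor produces a monic equation for $f$ over $A^\circ$; the inclusion ``integral closure of the unit ball $\subseteq B$'' is the easy direction, and the reverse is false without further input. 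So the sentence ``Redoing the analysis \dots identifies $B$ with the integral closure of $A^\circ$'' is asserting precisely the statement you still need to prove.

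The missing observation is short: if $\norm{f}_{\mathrm{sp}}\le 1$ then $\norm{tf}_{\mathrm{sp}}=c^{-1}\norm{f}_{\mathrm{sp}}\le c^{-1}<1$, hence $\norm{(tf)^n}\to 0$ and $tf\in A^\circ$. Thus $B\subseteq t^{-1}A^\circ$ unconditionally; when $A$ is uniform, $t^{-1}A^\circ$ is bounded, so $B$ is bounded and your own criterion closes the loop. This is exactly the content of the paper's argument for part~(2): the paper first cites \cite[Lemma~2.29]{NS22} for part~(1), then uses the resulting inequality $C\norm{x}\le\norm{x}_{\mathrm{sp}}$ (your $t^{-1}$ plays the role of $C^{-1}$) to bound $\norm{f^n}\le C^{-1}\norm{f}_{\mathrm{sp}}^n\le C^{-1}$. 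Once you insert this one line, your self-contained proof is correct and in fact proves the cited lemma along the way.
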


\begin{proof}
The first statement follows from \cite[Lemma 2.29]{nakazato2022Finite}. By the paragraph above \cite[Definition 2.26]{nakazato2022Finite}, for any $f \in A^\circ$, there exists a real number $C > 0$ such that $\norm{f^n} \leq C$ for any $n > 0$ and thus, $\norm{f}_{\mathrm{sp}} \leq \lim_{n \to \infty} C^{1/n} = 1$. Conversely, by the first statement, there exists some $C > 0$ such that $C \cdot \norm{x} \leq \norm{x}_{\mathrm{sp}}$ for any $x \in A$. If $f$ is in $A_{\norm{\cdot}_{\mathrm{sp}} \leq 1}$, we have $\norm{f^n} \leq C^{-1} \norm{f^n}_{\mathrm{sp}} = C^{-1} (\norm{f}_{\mathrm{sp}})^n \leq C^{-1}$ for any $n > 0$. This shows that $f$ is a power-bounded element and this completes the proof.
\end{proof}

See \cite[Exercise 7.2.6]{bhatt2017Lecture} and \cite[Definition 2.8.13]{kedlaya2015Relative} for the following definition. This explanation of uniform completion is based on the first author's work \cite[Definition 3.4 and Definition 3.6]{ishizuka2024Calculationa}.

\begin{definition}[Uniformization and uniform completion of a Tate ring] \label{UniformizationUniformCompletion}
    Let $A = A_0[1/t]$ be a Tate ring.
\begin{enumerate}
\item
The \emph{uniformization} $A^u = (A_0[1/t])^u$ of $A = A_0[1/t]$ is defined as a Tate ring $A^u  \defeq  A_0^+[1/t]$ which has a ring of definition $A_0^+$, where $A_0^+$ is some ring of integral elements of $A$ such that $A_0 \subseteq A_0^+$.
In particular, we can take $A_0^+$ as the set of all power-bounded elements $A^\circ$ in $A$.

\item
The \emph{uniform completion} of $A = A_0[1/t]$ is defined as the completion of $A^u = A_0^+[1/t]$, which is $\widehat{A_0^+}[1/t]$. Denote it by $\widehat{A^u}$.
\end{enumerate}
\end{definition}

By \Cref{integraluniform}, the uniformization changes the topology of a Tate ring and this is a well-defined notion as follows.

\begin{lemma}
\label{integraluniform}
For any Tate ring $A = A_0[1/t]$, the topology of the uniformization $A^u$ does not depend on the choice of $A^+_0$. Moreover, in the definition of the uniform completion, $\widehat{A_0^+}$ is an integrally closed ring of definition of $\widehat{A^u}$ and thus \(\widehat{A^u}\) is a complete uniform Tate ring.
\end{lemma}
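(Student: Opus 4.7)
My plan is to split the argument into the independence of topology (Part 1) and the two properties of $\widehat{A_0^+}$ in $\widehat{A^u}$ (Part 2). For Part 1, the strategy is to identify the $t$-adic topology on $A_0^+[1/t]$, for any choice of a ring of integral elements $A_0^+$ containing $A_0$, with the spectral seminorm topology on $A$ intrinsic to its original Tate structure. Fix the seminorm $\norm{\cdot}_{A_0,(t),c}$ of Definition \ref{Seminorm} together with its spectral version $\norm{\cdot}_{\mathrm{sp}}$. Since $A_0^+ \subseteq A^\circ$, every $f \in A_0^+$ has uniformly bounded powers, whence $\norm{f}_{\mathrm{sp}} = \lim_n \norm{f^n}^{1/n} \leq 1$; so $A_0^+$ sits in the unit ball of $\norm{\cdot}_{\mathrm{sp}}$ and is therefore bounded in the spectral seminorm topology. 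Combined with openness propagated from $A_0 \subseteq A_0^+$ using Lemma \ref{UniformPropeties}, this identifies $A_0^+$ as a ring of definition for $(A,\norm{\cdot}_{\mathrm{sp}})$, so the Tate topology on $A^u = A_0^+[1/t]$ recovers exactly the spectral seminorm topology, which is manifestly independent of $A_0^+$. An equivalent direct comparison: for any two choices $A_0^+, A_0^{++}$, each element of $A_0^{++}$ is integral over $A_0 \subseteq A_0^+$ and power-bounded, so a Lemma \ref{integralExt2}-style argument produces $m$ with $t^m A_0^{++} \subseteq A_0^+$, and symmetrically, giving equality of the induced $t$-adic topologies.

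For Part 2, I need to verify that $\widehat{A_0^+}$ is a ring of definition of $\widehat{A^u}$, and that it is integrally closed in $\widehat{A_0^+}[1/t] = \widehat{A^u}$. The first follows directly from the description of $t$-completion of Tate rings recalled in the Notation section: since $A^u = A_0^+[1/t]$ is a Tate ring with ring of definition $A_0^+$ and pseudo-uniformizer $t$, its $t$-completion is $\widehat{A_0^+}[1/t]$ with $\widehat{A_0^+}$ carrying the $t$-adic topology and hence serving as a ring of definition. For the integral closedness, I would use a standard approximation argument. Let $\beta \in \widehat{A^u}$ satisfy $\beta^n + c_{n-1}\beta^{n-1} + \cdots + c_0 = 0$ with $c_i \in \widehat{A_0^+}$. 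Choose $\beta_k \in A^u$ with $\beta \equiv \beta_k \pmod{t^k \widehat{A_0^+}}$ and $c_{i,k} \in A_0^+$ with $c_i \equiv c_{i,k} \pmod{t^k \widehat{A_0^+}}$; then modulo a sufficiently high power of $t$, the $\beta_k$ satisfy an integral dependence relation with coefficients in $A_0^+$, and the integral closedness of $A_0^+$ in $A^u$ (which holds by hypothesis since $A_0^+$ is a ring of integral elements of $A$) allows us to adjust $\beta_k$ by a $t$-small correction to land inside $A_0^+$. Passing to the $t$-adic limit yields $\beta \in \widehat{A_0^+}$, as desired.

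The main obstacle lies in Part 1: proving that different choices of rings of integral elements give the same $t$-adic topology. The subtlety is that rings of integral elements, being open integrally closed subrings contained in $A^\circ$, are not required to be bounded in $A$'s original topology, so a priori they could induce genuinely different Tate topologies on $A_0^+[1/t]$. The resolution is to use the spectral seminorm of Definition \ref{Seminorm} and Lemma \ref{UniformPropeties} as an intrinsic invariant mediating between all admissible choices; this is what makes the uniformization $A^u$ a well-defined topological ring in spite of the apparent dependence on $A_0^+$.
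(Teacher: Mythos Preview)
Your alternative approach in Part 1 is essentially the paper's argument: the paper notes $A_0^+ \subseteq A^\circ$ and cites \cite[Lemma 2.3]{NS22}, whose content is exactly that any subring sandwiched as $A_0 \subseteq A_0^+ \subseteq A^\circ$ with $A_0^+$ integrally closed yields the same $t$-adic topology as $A^\circ$. Your ``Lemma~\ref{integralExt2}-style'' computation ($(tx)^N \in A_0 \subseteq A_0^+$ forces $tx \in A_0^+$, so $tA^\circ \subseteq A_0^+$) is the right mechanism. One correction: you assert that each element of $A_0^{++}$ is \emph{integral over $A_0$}, which is false in general---rings of integral elements are merely contained in $A^\circ$, so their elements are only power-bounded (equivalently, completely integral over $A_0$). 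Fortunately power-boundedness is all the argument needs.

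Your primary approach via the spectral seminorm, however, has a real gap at the openness step. You write that openness of $A_0^+$ in the $\norm{\cdot}_{\mathrm{sp}}$-topology is ``propagated from $A_0 \subseteq A_0^+$ using Lemma~\ref{UniformPropeties}''. But $A_0$ is the unit ball for $\norm{\cdot}_{A_0}$, not for $\norm{\cdot}_{\mathrm{sp}}$; it is open in the spectral seminorm topology only when $A$ is already uniform, which is precisely what you cannot assume. Lemma~\ref{UniformPropeties} gives nothing here. The honest way to get openness is to show $tA^\circ \subseteq A_0^+$---but that is exactly the computation from your alternative approach, so the spectral-seminorm route is not genuinely independent; it collapses back to the direct comparison. (Once $tA^\circ \subseteq A_0^+$ is known, it is indeed true that the $A_0^+$-topology coincides with the spectral seminorm topology, since one checks $\{f:\norm{f}_{\mathrm{sp}}<1\}=A^{\circ\circ}$ and $tA^\circ \subseteq A^{\circ\circ}\subseteq A^\circ$.)

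For Part 2 the paper simply cites \cite[Lemma 8.3.6]{GR18} or \cite[Corollary 2.8]{NS22}. Your approximation sketch is plausible in outline but underspecified: you need to control the error $\beta_k^n+\sum c_{i,k}\beta_k^i$ and then argue that an element of $A_0^+[1/t]$ lying in $t^m\widehat{A_0^+}$ actually lies in $t^m A_0^+$, which requires knowing $A_0^+[1/t]\cap \widehat{A_0^+}=A_0^+$ (i.e., $t$-adic separatedness of the quotient). This is not automatic and is part of what the cited lemmas establish.
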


\begin{proof}
For the first assertion, let $A_0^\circ$ be the complete integral closure of $A_0$ in $A$ (this is equal to the set of power-bounded elements \(A^\circ\) of $A$ by \cite[Lemma 2.13]{nakazato2022Finite}). Then we have $tA_0^\circ \subseteq A_0^+ \subseteq A_0^\circ \subseteq A$ by \cite[Lemma 2.3]{nakazato2022Finite}. So the topology of $A^u$ is completely determined by $A^\circ = A_0^\circ$. Because of \cite[Corollary 2.8]{nakazato2022Finite}, a ring of definition $\widehat{A_0^+}$ of $\widehat{A^u}$ is integrally closed in $\widehat{A^u}$ and thus, $\widehat{A^u}$ is a uniform complete Tate ring.
\end{proof}

We can take a canonical composite map of Tate rings $i : A \to A^u \to \widehat{A^u}$.
This map has a universal property as follows.

\begin{proposition} \label{UniversalityUniformCompletion}
    Let \(A\) be a Tate ring.
    Then the canonical map \(\map{i}{A}{\widehat{A^u}}\) is the universal map to uniform complete Tate rings.
    That is, every map of Tate rings \(\map{h}{A}{B}\), where \(B\) is a uniform complete Tate ring, uniquely factors through \(i \colon A \to A^{\widehat{u}}\).
\end{proposition}

\begin{proof}
    By restricting the map $h:A \to B$ to $A^\circ$, we have a continuous map $A^\circ \to B^\circ$.
    Inverting \(t\), we can get a map of Tate rings \(A^\circ[1/t] \to B^\circ[1/t]\).
    As topological rings, we have \(B = B^\circ[1/t]\) and \(A^u = A^\circ[1/t]\) since \(B\) is uniform and \(A^\circ\) is a ring of integral elements of \(A\) that contains a ring of definition of \(A\).
    Therefore, we have a unique map of uniform Tate rings \(A^u \to B\) and the \(t\)-completion of this map is the desired map \(A^{\widehat{u}} \to B\).
\end{proof}

\begin{remark}
In \cite{bhatt2019Perfectoid} and \cite{kedlaya2015Relative}, the \emph{uniform completion} of a Banach ring $(A,\norm{\cdot})$ is defined as the completion with respect to the spectral seminorm. Denote it by $(\widehat{A}^{\mathrm{sp}}, \norm{\cdot}_{\mathrm{sp}})$ and this is a uniform Banach ring in the sense of \cite{bhatt2019Perfectoid} because of the power-multiplicativity of this norm \(\norm{\cdot}_{\mathrm{sp}}\). This has the same universality in uniform Banach rings as above by \cite[Definition 2.8.13]{kedlaya2015Relative}.
\end{remark}

\begin{proposition}
\label{uniformcompletion}
For any Tate ring $A = A_0[1/t]$, the uniform completion $\widehat{A^u}$ naturally has a structure of a uniform Banach ring. This uniform Banach ring $(\widehat{A^u}, \norm{\cdot}_{\widehat{A^u}})$ satisfies the universality of the uniform completion as a Banach ring of $(A, \norm{\cdot}_{A_0}  \defeq  \norm{\cdot}_{A_0,(t),c})$. In particular, we have an isomorphism of topological rings $\widehat{A^u} \cong \widehat{A}^{\mathrm{sp}}$.
\end{proposition}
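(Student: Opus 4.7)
The plan is to verify in three steps that $\widehat{A^u}$ carries a natural uniform Banach ring structure, that this structure enjoys the universal property of the uniform completion in the category of Banach rings, and that as a consequence it is canonically isomorphic to $\widehat{A}^{\mathrm{sp}}$ as a topological ring.

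First, I would observe that by Lemma \ref{integraluniform}, the ring $\widehat{A_0^+}$ is an integrally closed ring of definition of $\widehat{A^u}$, so $\widehat{A^u}$ is a uniform complete Tate ring with $(\widehat{A^u})^\circ = \widehat{A_0^+}$. Applying Lemma \ref{UniformPropeties} to $\widehat{A^u}$, the topology of $\widehat{A^u}$ coincides with the one induced by its spectral seminorm $\norm{\cdot}_{\mathrm{sp}}$, which is power-multiplicative by Fekete's subadditivity lemma. Combined with Definition \ref{Seminorm} applied to the pair $(\widehat{A_0^+}, (t))$, the seminorm $\norm{\cdot}_{\widehat{A_0^+}, (t), c}$ defines the same topology, and its unit disk recovers $\widehat{A_0^+}$. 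Thus $(\widehat{A^u}, \norm{\cdot})$ is a uniform Banach ring in the sense of \cite{Arizona17}.

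Next, I would verify the universal property. Let $(B, \norm{\cdot}_B)$ be any uniform Banach ring, regarded as a uniform complete Tate ring with ring of definition $B^\circ$, and let $h : (A, \norm{\cdot}_{A_0}) \to (B, \norm{\cdot}_B)$ be a bounded homomorphism. Any bounded homomorphism of seminormed rings is continuous, so $h$ is a map of Tate rings, and Proposition \ref{UniversalityUniformCompletion} supplies a unique continuous ring homomorphism $j : \widehat{A^u} \to B$ with $h = j \circ i$. Since both $\widehat{A^u}$ and $B$ are uniform, continuity of $j$ is equivalent to boundedness with respect to the spectral seminorms by Lemma \ref{UniformPropeties}, so $j$ is in fact the unique map of Banach rings extending $h$. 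This establishes the universality of $(\widehat{A^u}, \norm{\cdot})$ as a uniform completion in the sense of Banach rings.

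Finally, the isomorphism $\widehat{A^u} \cong \widehat{A}^{\mathrm{sp}}$ will follow from the uniqueness of corepresenting objects: by \cite[Definition 2.8.13]{KL15}, $\widehat{A}^{\mathrm{sp}}$ is characterized up to unique isomorphism of uniform Banach rings by precisely the universal property just verified for $\widehat{A^u}$, so the two canonical maps from $A$ induce a unique isomorphism of topological rings between them. I expect the main subtle point to be the identification $(\widehat{A^u})^\circ = \widehat{A_0^+}$ together with the power-multiplicativity of the resulting norm, since one must carefully pass between the two equivalent descriptions of uniformity, namely boundedness of $A^\circ$ as in Definition \ref{Uniform} and power-multiplicativity of the topology-defining seminorm, in order to reconcile the Tate ring formalism of \cite{NS22} with the Banach ring formalism of \cite{Arizona17, KL15}.
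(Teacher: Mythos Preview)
Your proposal is correct and follows essentially the same route as the paper: both arguments establish the Banach-ring universal property by reducing a bounded map $h:(A,\norm{\cdot}_{A_0})\to (B,\norm{\cdot}_B)$ to a continuous map of Tate rings and then completing. The only cosmetic difference is that you invoke Proposition~\ref{UniversalityUniformCompletion} as a black box, whereas the paper redoes the key continuity estimate directly (showing $\varphi(A^\circ)\subset B^\circ$ and bounding $\norm{\varphi(t^k a)}_B$); both packagings are equivalent, and your appeal to uniqueness of corepresenting objects for the final isomorphism is exactly how the paper concludes as well.
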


\begin{proof}
    The first assertion follows from \Cref{Seminorm} and \Cref{UniformPropeties}.
    Take any uniform Banach ring $(B, \norm{\cdot}_B)$ and a map of Banach rings $\varphi : (A, \norm{\cdot}_{A_0}) \to (B, \norm{\cdot}_B)$.
    Then there exists some $C > 0$ such that $\norm{\varphi(a)}_{B} \leq C \cdot \norm{a}_{A_0}$ for any $a \in A$ by continuity. By \Cref{UniformPropeties}, we can assume that \(\norm{\cdot}_B\) is power-multiplicative and thus we have $\varphi(A^\circ) \subseteq B^\circ = B_{\norm{\cdot}_B \leq 1}$. Fix a positive real number $r > 0$. For any $a \in A^\circ$, we have the inequalities:
    \begin{equation*}
    \norm{\varphi(t^k a)}_B \leq \norm{\varphi(t)}_B^k \cdot \norm{\varphi(a)}_B \leq C \cdot \norm{t}_{A_0}^k = C \cdot c^{-k} < r
    \end{equation*}
    for some $k > 0$. This shows that the map of rings $A = A^\circ[1/t] \to B$ induces a map of topological rings $A^u \to B$ which extends $\varphi$.
    In particular, this induces a map of Banach rings $(A^u, \norm{\cdot}_{A^\circ}) \to (B, \norm{\cdot}_B)$.

    Taking completion, we have a unique map of Banach rings $(\widehat{A^u}, \norm{\cdot}_{A^\circ}) \to (B, \norm{\cdot}_B)$ and this completes the proof.
\end{proof}

Using these notions, we establish the following result, which is one of the most essential part of the main theorem.

\begin{proposition} \label{almostsur1}
    Let the notation be as in \Cref{ConstRinfty}. Then the natural ring map $\widehat{R}_{\infty} \to \widehat{\widetilde{R}}_{\infty}$ is $(p)^{1/p^\infty}$-almost surjective.
\end{proposition}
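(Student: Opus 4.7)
The plan is to combine the uniform completion machinery of this section with Bhatt's Theorem~\ref{flatness}, applied to a perfectoid cover of $\widehat{R}_{\infty,\infty}[1/p]$ obtained from Cohen's structure theorem, and then to descend the resulting Tate-ring surjection to an almost surjection of integral elements.

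First, I would produce a perfectoid Tate ring surjecting onto $A := \widehat{R}_{\infty,\infty}[1/p]$. By Cohen's structure theorem, $R$ is a quotient of $S := W(k)[|t_2, \ldots, t_n|]$, and $p, t_2, \ldots, t_n$ is a regular system of parameters of $S$. Consequently every intermediate ring $S[p^{1/p^j}, t_2^{1/p^j}, \ldots, t_n^{1/p^j}]$ is again a regular local ring, so $S_{\infty,\infty}$ is a filtered colimit of normal rings along flat transitions and is itself integrally closed in $S_{\infty,\infty}[1/p]$. Hence $\widetilde{S}_{\infty,\infty} = S_{\infty,\infty}$, and Lemma~\ref{preparation1}(4) applied to $S$ shows that $\mathcal{A} := \widehat{S}_{\infty,\infty}[1/p]$ is a perfectoid Tate ring. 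The surjection $S_{\infty,\infty} \twoheadrightarrow R_{\infty,\infty}$ lifts after $p$-adic completion to a continuous surjection $\mathcal{A} \twoheadrightarrow A$ of Tate rings whose kernel $J$ is a closed ideal.

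Next, I would identify the uniform completion of $A$ with $B := \widehat{\widetilde{R}}_{\infty,\infty}[1/p]$. The ring $B$ is perfectoid Tate by Lemma~\ref{preparation1}(4), hence uniform and complete. The natural map $A \to B$ is continuous with dense image, the density coming from Lemma~\ref{preparation1}(3) which exhibits every element of $\widetilde{R}_{\infty,\infty}$ as $(p)^{1/p^\infty}$-almost approximable from $R_{\infty,\infty}$ via the chain $R_{\infty,\infty} \subseteq C(R_{\infty,\infty}) \subseteq \widetilde{R}_{\infty,\infty}$. Proposition~\ref{UniversalityUniformCompletion} then gives a canonical map $\widehat{A^u} \to B$ of Tate rings, and combined with Proposition~\ref{uniformcompletion} (which identifies Tate and Banach uniform completions) and the density, one obtains $\widehat{A^u} \cong B$. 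Bhatt's Theorem~\ref{flatness} applied to $\mathcal{A}$ with closed ideal $J$ then shows that the composite $\mathcal{A} \twoheadrightarrow A \to B$ is surjective. A standard quotient-seminorm argument upgrades this to a $(p)^{1/p^\infty}$-almost surjection $\widehat{S}_{\infty,\infty} = \mathcal{A}^\circ \to B^\circ = \widehat{\widetilde{R}}_{\infty,\infty}$ of integral elements. Since this map factors through the surjection $\widehat{S}_{\infty,\infty} \twoheadrightarrow \widehat{R}_{\infty,\infty}$ coming from $p$-completion of $S_{\infty,\infty} \twoheadrightarrow R_{\infty,\infty}$, the images of $\widehat{S}_{\infty,\infty}$ and $\widehat{R}_{\infty,\infty}$ inside $\widehat{\widetilde{R}}_{\infty,\infty}$ coincide, and the almost surjectivity transfers to $\widehat{R}_{\infty,\infty} \to \widehat{\widetilde{R}}_{\infty,\infty}$ as desired.

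The main obstacle is the identification $\widehat{A^u} \cong B$: showing that $\widehat{\widetilde{R}}_{\infty,\infty}$ captures the power-bounded (equivalently, almost-integral) elements of $A$ up to $(p)^{1/p^\infty}$-almost equivalence, together with density of $A \to B$, requires a careful application of Lemma~\ref{preparation1} within the seminorm framework of Definition~\ref{Seminorm} and Proposition~\ref{uniformcompletion}. The descent from the Tate-level surjectivity of Bhatt to an integral-ring almost surjection is more routine but must be executed explicitly via the quotient seminorm on the perfectoid Tate ring $\mathcal{A}$.
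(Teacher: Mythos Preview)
Your overall strategy coincides with the paper's proof: pass to the perfectoid cover $\widehat{S}_{\infty,\infty}[1/p]$ via Cohen's structure theorem, identify the uniform completion of $\widehat{R}_{\infty,\infty}[1/p]$ with $\widehat{\widetilde{R}}_{\infty,\infty}[1/p]$, apply Bhatt's Theorem~\ref{flatness}, and then descend to integral elements via the quotient-seminorm lemma (the paper cites \cite[Lemma~2.8.5]{Arizona17} for this last step, which is exactly your ``standard quotient-seminorm argument'').

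The one place where your outline is less precise than the paper is the identification $\widehat{A^u}\cong B$. Your density argument is not quite enough on its own (a dense continuous map from a uniform completion into a uniform complete ring need not be an isomorphism without further input), and the appeal to Lemma~\ref{preparation1}(3) is slightly misplaced: that lemma concerns $C(R_{\infty,\infty})\hookrightarrow\widetilde{R}_{\infty,\infty}$, not $R_{\infty,\infty}$. The paper handles this step more directly and avoids density altogether: it passes back to the \emph{uncompleted} Tate ring $D=R_{\infty,\infty}[1/p]$, observes that $\widetilde{R}_{\infty,\infty}$ is literally a ring of integral elements $D_0^+$ of $D$, so that by Definition~\ref{UniformizationUniformCompletion} one has $\widehat{D^u}=\widehat{\widetilde{R}}_{\infty,\infty}[1/p]=C$ immediately. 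Then the invariance of uniform completion under completion gives $\widehat{B^u}=\widehat{(\widehat{D})^u}\cong\widehat{D^u}=C$. This is the cleanest way to resolve what you correctly flagged as the main obstacle.
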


First, we give a proof of \Cref{almostsur1} by using the uniform completion of Tate rings.

\begin{proof}[Proof of \Cref{almostsur1} via uniform completion]
    Following the notation as in \Cref{ConstRinfty}, we get a map of complete Tate rings:
    \begin{equation*}
    f \colon \widehat{S}_{\infty}[1/p] \to \widehat{R}_{\infty}[1/p].
    \end{equation*}
    Then we find that \(\widehat{S}_{\infty}\) is a perfectoid Tate ring and \(\widehat{S}_{\infty}\) is a ring of integral elements of $A$, because $S_{\infty}$ is integrally closed in $S_{\infty}[1/p]$ and \cite[Corollary 2.8]{nakazato2022Finite}. By the proof of \Cref{PropertiesAdjoining} and \citeSta{0315}, the restriction map \(\widehat{S}_{\infty} \to \widehat{R}_{\infty}\) is surjective and so is \(f\).
    Therefore, we can write $\widehat{R}_{\infty}[1/p] = (\widehat{S}_{\infty}[1/p])/J$ for some closed ideal $J \subseteq \widehat{S}_{\infty}[1/p]$. Consider a map of complete Tate rings:
    \begin{equation*}
    g \colon \widehat{R}_{\infty}[1/p] \to \widehat{\widetilde{R}}_{\infty}[1/p].
    \end{equation*}
    Here, $\widehat{\widetilde{R}}_{\infty}[1/p]$ is a perfectoid Tate ring by \Cref{ConstRinfty} (4), and $\widehat{\widetilde{R}}_{\infty}$ is a ring of power-bounded elements of $\widehat{\widetilde{R}}_{\infty}[1/p]$: in non-completed case, we have \((\widetilde{R}_{\infty}[1/p])^\circ = \widetilde{R}_{\infty}\) by \cite[Lemma 2.13]{nakazato2022Finite} and \cite[Proposition 7.1]{nakazato2023Variant} and thus \(\widehat{\widetilde{R}}_{\infty}\) is a ring of power-bounded elements of \(\widehat{\widetilde{R}}_{\infty}[1/p]\) by using \cite[Corollary 2.8 and Lemma 2.13]{nakazato2022Finite}. By \Cref{integraluniform}, \(\widehat{\widetilde{R}}_{\infty}[1/p]\) is the complete uniform Tate ring.
    The map $\map{g}{\widehat{R}_{\infty}[1/p]}{\widehat{\widetilde{R}}_{\infty}[1/p]}$ satisfies the universal property of the uniform completion (\Cref{UniversalityUniformCompletion}) and thus \(\widehat{\widetilde{R}}_{\infty}[1/p]\) is the uniform completion of \(\widehat{R}_{\infty}[1/p]\), which is the quotient of a perfectoid Tate ring \(\widehat{S}_{\infty}[1/p]\) by a closed idel \(J\).
    Since $\widehat{R}_{\infty}[1/p]$ is a Tate ring, \Cref{uniformcompletion} guarantees that we can apply \Cref{flatness} to the map \(g\). This shows that \(\map{g}{\widehat{R}_{\infty}[1/p]}{\widehat{\widetilde{R}}_{\infty}[1/p]}\) is surjective.
    Now \cite[Lemma 2.8.5 at page 115]{bhatt2019Perfectoid} gives that 
    \begin{equation*}
        \widehat{S}_{\infty} \xrightarrow{f} \widehat{R}_{\infty} \xrightarrow{g} \widehat{\widetilde{R}}_{\infty}
    \end{equation*}
    is $(p)^{1/p^\infty}$-almost surjective and, in particular, the map \(\widehat{R}_{\infty} \to \widehat{\widetilde{R}}_{\infty}\) is $(p)^{1/p^\infty}$-almost surjective.
\end{proof}

Second, we give a proof of \Cref{almostsur1} by using perfectoidization intorduced in \cite{bhatt2022Prismsa} and its representation by using \(p\)-root closure \cite{ishizuka2024Calculationa}.

\begin{proof}[Proof of \Cref{almostsur1} via perfectoidization]
    The map \(\widehat{R}_{\infty} \to \widehat{\widetilde{R}}_{\infty}\) factors through \(\widehat{C(R_{\infty})}\).
    By \Cref{PropertiesAdjoining} (3), the latter map \(\widehat{C(R_{\infty})} \to \widehat{\widetilde{R}}_{\infty}\) is a $(p)^{1/p^\infty}$-almost surjective map.
    Let us show that the former map \(\widehat{R}_{\infty} \to \widehat{C(R_{\infty})}\) is also $(p)^{1/p^\infty}$-almost surjective.
    By \cite[Corollary 6.2]{ishizuka2024Calculationa}, the map \(\widehat{R}_{\infty} \to \widehat{C(R_{\infty})}\) is equal to the canonical map \(\widehat{R}_{\infty} \to (\widehat{R}_{\infty})_{\perfd}\), where \((\widehat{R}_{\infty})_{\perfd}\) is the perfectoidization of the semiperfectoid ring \(\widehat{R}_{\infty}\), and this is surjective by \cite[Theorem 7.4]{bhatt2022Prismsa}.
\end{proof}

Perhaps not true, but we have not been able to answer the following question.

\begin{question} \label{almostsurinj}
    Is the inclusion map $R_{\infty} \hookrightarrow \widetilde{R}_{\infty}$ also $(p)^{1/p^\infty}$-almost surjective?
\end{question}

An affirmative answer to \Cref{almostsurinj} shows that the pair of a ring and its ideal $(R_{\infty, \infty}, (p))$ is preuniform in the sense of \cite[Definition 2.16]{nakazato2022Finite}.
If so, \(R_{\infty}\) is uniform by \cite[Proposition 7.1]{nakazato2023Variant} and \cite[Lemma 2.13]{nakazato2022Finite} and so is \(\widehat{R}_{\infty}[1/p]\).
By the proof of \Cref{almostsur1}, this gives an isomorphism \(\widehat{R}_{\infty}[1/p] \xrightarrow{\cong} \widehat{\widetilde{R}}_{\infty}[1/p]\).
% We only know that this $(p)^{1/p^\infty}$-almost surjectivity is equivalent to the injectivity of $\widehat{R}_{\infty} \to \widehat{\widetilde{R}}_{\infty}$ because of \cite[Proposition 2.4 and Lemma 5.5]{nakazato2022Finite} and \cite[Lemma 3.15]{nakazato2023Variant}.

The following proposition is a special case of \citeSta{0CNF} and this features topological invariance between $R_{\infty}$ and $C(R_{\infty})$.

\begin{proposition}[{\citeSta{0CNF} (see also \cite[Lemma 3.4]{witaszek2022Keels})}]
    Let $p>0$ be a prime number and let $A$ be a $p$-torsion-free \(p\)-adically Zariskian domain. If $C(A)$ is the $p$-root closure of $A$ in $A[1/p]$, then the natural ring map $A \to C(A)$ is a universal homeomorphism. In particular, there is an isomorphism of \'etale sites $\Spec(A)_\mathrm{\acute{e}t} \cong \Spec(C(A))_\mathrm{\acute{e}t}$.
\end{proposition}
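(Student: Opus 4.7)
The plan is to recognize $\Spec C(A) \to \Spec A$ as a universal homeomorphism by verifying the three standard defining properties---integrality, surjectivity on spectra, and universal injectivity (radicialness)---and then to invoke the topological invariance of the small \'etale site under universal homeomorphisms for the second assertion.

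First I would observe that $A \hookrightarrow C(A) \subseteq A[1/p]$ is injective, since $A$ is $p$-torsion free, and integral, because every $x \in C(A)$ satisfies $x^{p^n} \in A$ for some $n \geq 0$ and hence is a root of the monic polynomial $T^{p^n} - x^{p^n} \in A[T]$. Lying-over applied to this injective integral extension then produces the surjectivity of $\Spec C(A) \to \Spec A$.

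Next I would verify universal injectivity. For injectivity on points, if $\fq_1, \fq_2 \in \Spec C(A)$ both contract to $\fp \subset A$, then for any $x \in \fq_1$ we have $x^{p^n} \in A \cap \fq_1 = \fp \subseteq \fq_2$, forcing $x \in \fq_2$; by symmetry $\fq_1 = \fq_2$. For purely inseparable residue field extensions, any image $\bar x \in C(A)/\fq$ satisfies $\bar x^{p^n} \in A/\fp$; passing to fraction fields, each $\alpha = \bar x/\bar y \in k(\fq) = \Frac(C(A)/\fq)$ has $\alpha^{p^N} = \bar x^{p^N}/\bar y^{p^N} \in k(\fp)$ for $N$ large enough, so $k(\fq)/k(\fp)$ is purely inseparable.

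Combining these properties, $\Spec C(A) \to \Spec A$ is integral, universally injective, and surjective, hence a universal homeomorphism by the standard characterization (see \cite{Stacks}). The asserted equivalence of \'etale sites is then immediate from the topological invariance of the small \'etale site under universal homeomorphisms. I do not anticipate any significant obstacle: the argument is essentially formal once one notices that every generator of $C(A)$ over $A$ has some $p^n$-th power already in $A$, which forces a Frobenius-like rigidity on both primes and residue fields. The hypothesis that $p$ lies in the Jacobson radical of $A$ does not seem to play an active role in this topological statement and is presumably included to match the standing assumptions of the paper.
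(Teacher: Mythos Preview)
Your verification is sound, with one small point to tighten: the deduction ``$\alpha^{p^N}\in k(\fp)$, hence $k(\fq)/k(\fp)$ is purely inseparable'' is only immediate when $\operatorname{char} k(\fp)=p$, i.e.\ when $p\in\fp$. For primes with $p\notin\fp$ the implication is not automatic in characteristic~$0$, but the conclusion follows at once from the observation $C(A)[1/p]=A[1/p]$, so that $\Spec C(A)\to\Spec A$ is an isomorphism over $D(p)$ and the residue fields there coincide. With this one-line patch, integrality, surjectivity, and radicialness are verified, and \cite[Tag~04DF]{Stacks} together with \cite[Tag~03SI]{Stacks} finish the argument exactly as you outline. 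You are also right that the Jacobson-radical hypothesis plays no role.

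The paper takes a genuinely different, more structural route. Using that $A$ is a domain, it passes to the absolute integral closure $A^+$, forms the multiplicative monoids $A^{1/p^n}=\{x\in A^+\mid x^{p^n}\in A\}$, and identifies the directed monoid system $A\to A^{1/p}\to A^{1/p^2}\to\cdots$ with the system $A\xrightarrow{x\mapsto x^p}A\xrightarrow{x\mapsto x^p}\cdots$ via the $p$-power maps. Doing the same for $C(A)$ and using that every $x\in C(A)$ has $x^{p^n}\in A$ for $n\gg0$, the induced map of monoid colimits is shown to be a bijection; the paper then invokes \cite[Lemma~3.4]{Wit}, which characterises universal homeomorphisms via such monoid perfections, followed by \cite[Tag~03SI]{Stacks}. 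Your argument is more elementary and self-contained, needing only the standard Stacks characterisation and no external input; the paper's approach packages the same phenomenon into a single monoid invariant but at the cost of importing a nontrivial lemma from \cite{Wit} and of using the domain hypothesis to form $A^+$.
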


\section{Proof of \Cref{maintheorem}}

In what follows, keep the notation as in \Cref{ConstRinfty}.

\begin{proof}[Proof of \Cref{maintheorem} (1)]
This is \Cref{almostsur1}.
\end{proof}

\begin{proof}[Proof of \Cref{maintheorem} (2)]
Let \(R^+\) be an absolute integral closure of \(R\). By \Cref{PropertiesAdjoining} (4), the \(p\)-adic completion \(\widehat{\widetilde{R}}_{\infty}\) is a perfectoid ring.
    Since the \(p\)-adic completion \(\widehat{R^+}\) of \(R^+\) is an integral domain by \cite{heitmann2022Surprisingly}, it is enough to show that the map \(\widehat{\widetilde{R}}_{\infty} \to \widehat{R^+}\) is injective and this is a consequence of \Cref{integralExt1}. Note that \((\widehat{\widetilde{R}}_{\infty})^\flat\) is also an integral domain (see, for example, \cite{dine2022Topologicala,dine2024Tilting}).
\end{proof}

\begin{proof}[Proof of \Cref{maintheorem} (3)]
    We have a \((p)^{1/p^\infty}\)-almost surjective map of perfectoid rings \(\map{\pi}{\widehat{S}_{\infty}}{\widehat{\widetilde{R}}_{\infty}}\).
    Taking tilts, we get a \((p^\flat)^{1/p^\infty}\)-almost surjective map of perfect rings \(\map{\pi^\flat}{(\widehat{S}_{\infty})^\flat}{(\widehat{\widetilde{R}}_{\infty})^\flat}\).
    An explicit calculation shows that \((\widehat{S}_{\infty})^\flat\) is isomorphic to the \(p^\flat\)-adic completion of the perfection of the formal power series ring \(k[|p^\flat, t_2^\flat, \dots, t_n^\flat|]\):
    \begin{equation*}
        (\widehat{S}_{\infty})^\flat \cong (k[|p^\flat, t_2^\flat, \dots, t_n^\flat|]_{\perf})^{\wedge_{p^\flat}}.
    \end{equation*}
    Then \(\pi^\flat\) induces a sequence of maps of perfect rings
    \begin{equation} \label{AlmostSurjNonComplete}
        k[|p^\flat, t_2^\flat, \dots, t_n^\flat|]_{\perf} \twoheadrightarrow \pi^{\flat}(k[|p^\flat, t_2^\flat, \dots, t_n^\flat|]_{\perf}) \hookrightarrow (\widehat{\widetilde{R}}_{\infty})^\flat
    \end{equation}
    such that the whole composite map is \((p^\flat)^{1/p^\infty}\)-almost surjective after taking the \(p^\flat\)-adic completion.
    For notational simplicity, we set
    \begin{equation*}
        R' \defeq \pi^\flat(k[|p^\flat, t_2^\flat, \dots, t_n^\flat|])
    \end{equation*}
    which is a complete Noetherian local sub-domain of \((\widehat{\widetilde{R}}_{\infty})^\flat\) and the residue field is \(k\). The middle term of the sequence (\ref{AlmostSurjNonComplete}) is equal to (the explicit representation of) the perfect closure \(R'_{\perf}\) of \(R'\).
    Namely, the above sequence of maps (\ref{AlmostSurjNonComplete}) gives a \((p^\flat)^{1/p^\infty}\)-almost surjective map of perfect rings
    \begin{equation} \label{AlmostSurjComplete}
        (R'_{\perf})^{\wedge_{p^\flat}} \xrightarrow{(p^\flat)^{1/p^\infty}\text{-almost surj}} (\widehat{\widetilde{R}}_{\infty})^\flat.
    \end{equation}
    We next show that this map is injective by using \Cref{InjectivePerfectClosure}.
    By \Cref{ChoiceSOP}, we can take a module-finite extension:
    \begin{equation}
        A' \defeq k[|p^\flat, t_2^\flat, \dots, t_d^\flat, t_{d+1}^\flat, \dots, t_{d+e}^\flat|] \hookrightarrow R'
    \end{equation}
    in the perfect domain \((\widehat{\widetilde{R}}_{\infty})^\flat\) for \(d = \dim(R)\) and \(e \geq 0\).
    Since \(\widetilde{R}_{\infty}\) is completely integrally closed in \(\widetilde{R}_{\infty}[1/p]\) by \cite[Proposition 7.1]{nakazato2023Variant}, so is the \(p\)-adic completion \(\widehat{\widetilde{R}}_{\infty}\) in \(\widehat{\widetilde{R}}_{\infty}[1/p]\) by \cite[Corollary 2.8]{nakazato2022Finite}.
    Then \cite[Main Theorem 1 (1)]{eto2024Ringtheoretic} shows that the tilt \((\widehat{\widetilde{R}}_{\infty})^\flat\) is also (completely) integrally closed in \((\widehat{\widetilde{R}}_{\infty})^\flat[1/p^\flat]\).
    In conjunction with the fact that \(p^\flat\) is a prime element of \(A'\), the composition map \(A' \hookrightarrow R' \hookrightarrow (\widehat{\widetilde{R}}_{\infty})^\flat\) satisfies the hypotheses of \Cref{InjectivePerfectClosure}. This implies that the map \((R'_{\perf})^{\wedge_{p^\flat}} \to (\widehat{\widetilde{R}}_{\infty})^\flat\) is injective. Consequently, with (\ref{AlmostSurjComplete}), the map \((R'_{\perf})^{\wedge_{p^\flat}} \hookrightarrow (\widehat{\widetilde{R}}_{\infty})^\flat\) is injective and \((p^\flat)^{1/p^\infty}\)-almost surjective and thus the mod \(p^\flat\) map
    \begin{equation} \label{AlmostIsomorphism}
        R'_{\perf}/p^\flat R'_{\perf} \xrightarrow{(p^\flat)^{1/p^\infty}\text{-almost isom}} (\widehat{\widetilde{R}}_{\infty})^\flat/p^\flat (\widehat{\widetilde{R}}_{\infty})^\flat
    \end{equation}
    is a \((p^\flat)^{1/p^\infty}\)-almost isomorphism.

    By \Cref{RobertsSinghSrinivas}, there exists a non-zero element \(x' \in A'\) such that the perfection \(R'_{\perf}\) is \((p^\flat x')^{1/p^\infty}\)-almost flat and \((p^\flat x')^{1/p^\infty}\)-almost faithful over \(A'\). In particular, the \((p^\flat x')^{1/p^\infty}\)-almost isomorphism (\ref{AlmostIsomorphism}) implies that the map
    \begin{equation} \label{AlmostFlatFaithful}
        k[|t_2^\flat, \dots, t_d^\flat|] \to (\widehat{\widetilde{R}}_{\infty})^\flat/p^\flat (\widehat{\widetilde{R}}_{\infty})^\flat
    \end{equation}
    induced from \(k[|p^\flat, t_2^\flat, \dots, t_d^\flat|] \hookrightarrow A' \hookrightarrow R' \hookrightarrow (\widehat{\widetilde{R}}_{\infty})^\flat\) is \((p^\flat x')^{1/p^\infty}\)-almost flat and \((p^\flat x')^{1/p^\infty}\)-almost faithful.
    Set \(x \defeq (x')^\sharp \in \widehat{S}_{\infty}\).
    Passing (\ref{AlmostFlatFaithful}) to the mixed characteristic by isomorphism \((\widehat{\widetilde{R}}_{\infty})^\flat/p^\flat (\widehat{\widetilde{R}}_{\infty})^\flat\), we get a \((px)^{1/p^\infty}\)-almost flat and \((px)^{1/p^\infty}\)-almost faithful map
    \begin{equation}
        W(k)[|t_2, \dots, t_d|]/pW(k)[|t_2, \dots, t_d|] \to \widehat{\widetilde{R}}_{\infty}/p\widehat{\widetilde{R}}_{\infty}
    \end{equation}
    which is induced from \(W(k)[|t_2, \dots, t_d|] \hookrightarrow S \twoheadrightarrow R \to \widehat{\widetilde{R}}_{\infty}\).

    We show that the map \(W(k)[|t_2, \dots, t_d|] \to \widehat{\widetilde{R}}_{\infty}\) is \((px)^{1/p^\infty}\)-almost flat and \((px)^{1/p^\infty}\)-almost faithful.
    Chosse a \(W(k)[|t_2, \dots, t_d|]\)-module \(M\) such that \(p^n M=0\). By induction on \(n\) and considering an exact sequence \(0 \to pM \to M \to M/pM \to 0\), we can show that the map \(W(k)[|t_2, \dots, t_d|] \to \widehat{\widetilde{R}}_{\infty}\) is \(p\)-complete \((px)^{1/p^\infty}\)-almost flat and \(p\)-complete \((px)^{1/p^\infty}\)-almost faithful (\Cref{DefAlmostFFlat}). As in the proof of \cite[Theorem 4.0.3]{cai2023Perfectoid}, it follows that \(\widehat{\widetilde{R}}_{\infty}\) is a \((px)^{1/p^\infty}\)-almost flat and \((px)^{1/p^\infty}\)-almost faithful \(W(k)[|t_2, \dots, t_d|]\)-algebra.
\end{proof}

\begin{lemma} \label{ChoiceSOP}
Let $p^\flat, t_2^\flat, \ldots, t_n^\flat \in R' \subseteq (\widehat{\widetilde{R}}_{\infty})^\flat$ be a sequence of elements defined by using $(\ref{AlmostSurjNonComplete})$. Then there exists a subsequence $t_{j_{d+1}}^\flat, \dots, t_{j_e}^\flat$ of $t_{d+1}^\flat, \dots, t_n^\flat$ such that the extended sequence of elements $p^\flat, t_2^\flat, \dots, t_d^\flat, t_{j_{d+1}}^\flat, \dots, t_{j_e}^\flat$ forms a system of parameters of $R'$, where $e = \dim(R')$.
\end{lemma}

\begin{proof}
    We note that the natural map $R' \hookrightarrow (\widehat{\widetilde{R}}_{\infty})^\flat$ induces an integral extension
    \begin{equation*}
        \overline{R'} \defeq  R'/(p^\flat (\widehat{\widetilde{R}}_{\infty})^\flat \cap R') \hookrightarrow (\widehat{\widetilde{R}}_{\infty})^\flat/p^\flat (\widehat{\widetilde{R}}_{\infty})^\flat \cong \widetilde{R}_{\infty}/p\widetilde{R}_{\infty}
    \end{equation*}
    and $\overline{R'}$ is a complete Noetherian local ring.
    
    Since $\widetilde{R}_{\infty}$ is an integral extension domain over a complete Noether local domain $R$, it follows that $\widetilde{R}_{\infty}$ is Henselian, and hence $\widetilde{R}_{\infty}/p\widetilde{R}_{\infty}$ has a unique maximal ideal. Set $\mfrakn \defeq  \sqrt{(t_2^\flat, \dots, t_d^\flat)\widetilde{R}_{\infty}/p\widetilde{R}_{\infty}}$. We claim that $\mfrakn$ is the maximal ideal of $\widetilde{R}_{\infty}/p\widetilde{R}_{\infty}$. Since $\widetilde{R}_{\infty}$ is an integral extension over a normal domain $R$, it follows from Lemma \ref{integralExt1} that $p\widetilde{R}_{\infty} \cap R=pR$ and thus $R/pR \hookrightarrow \widetilde{R}_{\infty}/p\widetilde{R}_{\infty}$ is an integral extension.
    The contraction \(\mfrakn \cap R/pR\) is a radical ideal which contains \(p, t_2, \dots, t_d\) and hence \(\mfrakn \cap R/pR = \sqrt{(t_2, \dots, t_d)R/pR} = \mfrakm R/pR\).
    The induced map $k \cong R/\mfrakm \hookrightarrow (\widetilde{R}_{\infty}/p\widetilde{R}_{\infty})/\mfrakn$ is an integral extension and thus \(\mfrakn\) is a maximal ideal of \(\widetilde{R}_{\infty}/p\widetilde{R}_{\infty}\).
    % Then since $R/(p,t_2,\ldots,t_d)R$ is an Artinian ring, it follows that $(\widetilde{R}_{\infty}/p\widetilde{R}_{\infty})/\mfrakn$ is zero-dimensional and every element in the maximal ideal of $(\widetilde{R}_{\infty}/p\widetilde{R}_{\infty})/\mfrakn$ is nilpotent (indeed, one can present $(\widetilde{R}_{\infty}/p\widetilde{R}_{\infty})/\mfrakn$ as a direct limit of Artinian rings that are module-finite over $R/(p,t_2,\ldots,t_d)R$). Since this ring is reduced, we conclude that $\mfrakn$ is a maximal ideal of $\widetilde{R}_{\infty}/p\widetilde{R}_{\infty}$.
    
    Since $p,t_2,\ldots,t_d$ is a system of parameters of $R$, and $R/pR \hookrightarrow \widetilde{R}_{\infty}/p\widetilde{R}_{\infty}$ and $\overline{R'} \hookrightarrow \widetilde{R}_{\infty}/p\widetilde{R}_{\infty}$ are integral extensions, it follows from \cite[Exercise 9.8]{matsumura1986Commutative} that
    \begin{align*}
        \dim \overline{R'} & = \dim \widetilde{R}_{\infty}/p\widetilde{R}_{\infty} = \dim R/pR = d-1 \quad \text{and} \\
        d-1 & = \height((t_2^\flat, \dots, t_d^\flat) \widetilde{R}_{\infty}/p\widetilde{R}_{\infty}) \leq \height((t_2^\flat, \dots, t_d^\flat) \overline{R'})
    \end{align*}
    and then $t_2^\flat, \dots, t_d^\flat$ forms a system of parameters of $\overline{R'} = R'/(p^\flat (\widehat{\widetilde{R}}_{\infty})^\flat \cap R')$. We know that $R'$ is a complete local domain, hence it is a catenary domain. This gives that
    \begin{equation*}
    \dim R'=\height(p^\flat (\widehat{\widetilde{R}}_{\infty})^\flat \cap R')+d-1.
    \end{equation*}
    Since \(p^\flat\) is a non-zero-divisor of \(R'\), the height of the ideal \(p^\flat (\widehat{\widetilde{R}}_{\infty})^\flat \cap R'\) is greater than or equal to \(1\).
    Using this fact, the dimension of \(R'/(p^\flat, t_2^\flat, \dots, t_d^\flat)R'\) is equal to \(\height(p^\flat (\widehat{\widetilde{R}}_{\infty})^\flat \cap R') - 1\).
    Since $t_{d+1}^\flat, \dots, t_n^\flat$ is a system of generators of the maximal ideal of the quotient ring, we can find a subsequence $t_{j_{d+1}}^\flat, \dots, t_{j_{e}}^\flat$ of $t_{d+1}^\flat, \dots, t_n^\flat$ such that $p^\flat,t_2^\flat, \dots, t_d^\flat, t_{j_{d+1}}^\flat, \dots, t_{j_e}^\flat$ is a system of parameters of $R'$ where \(e = \dim(R')\). This completes the proof of the lemma.
\end{proof}

\begin{remark} \label{BhattProof}
    % After releasing this paper, Bhargav Bhatt gave us a highly advanced but much simpler proof of Main Theorem \ref{maintheorem}. 
After releasing this paper, Bhargav Bhatt and an anonymous referee informed us of a similar construction that yields the same results as ours. Their proof is very simple and does not require the normality of \(R\). They essentially use the almost purity theorem as in \cite[Theorem 10.9]{bhatt2022Prismsa} and Andr\'e's flatness lemma as in \cite[Theorem 7.14]{bhatt2022Prismsa}. It turns out that there is a strong connection between perfectoidization and our mixed characteristic analogue of perfection.
    Here is the outline of his proof.
    
    Let \(R\) be a complete Noetherian local domain of mixed characteristic \((0,p)\) with perfect residue field \(k\).
    Consider the following commutative diagram:
    \begin{center}
        \begin{tikzcd}[column sep=tiny]
            {A = W(k)[|t_2, \dots, t_d|]} \arrow[r, hook] \arrow[d, hook]    & {S = W(k)[|t_2, \dots, t_n|]} \arrow[r, two heads] \arrow[d]  & R \arrow[d]   \\
            {\widehat{A}_\infty = A[p^{1/p^\infty}, t_2^{1/p^\infty}, \dots, t_d^{1/p^\infty}]^{\wedge_p}} \arrow[r] & {S \widehat{\otimes}_A \widehat{A}_\infty} \arrow[r, two heads] \arrow[d]   & {R \widehat{\otimes}_A \widehat{A}_\infty} \arrow[d] \\
            & {\widehat{S}_\infty = S[p^{1/p^\infty}, t_2^{1/p^\infty}, \dots, t_n^{1/p^\infty}]^{\wedge_p}} \arrow[r, two heads] & {R \widehat{\otimes}_S \widehat{S}_\infty}
        \end{tikzcd}
    \end{center}
    where all squares are pushout squares and the symbol \((-)^{\wedge_p}\) is the (classical) \(p\)-adic completion.
    The claim is the following.
\end{remark}

\begin{proposition} \label{BhattProof2}
    Keep the notation above.
    Let \(g \in A\) be a non-zero element such that \(A[1/g] \to R[1/g]\) is finite \'etale.
    Then the map \(A \to C(R \widehat{\otimes}_S \widehat{S}_\infty)^{\wedge_p}\) is \((g)_{\perfd}\)-almost faithful and \((g)_{\perfd}\)-almost flat, where \(C(R \otimes_S S_{\infty})^{\wedge_p}\) is the \(p\)-adic completion of the \(p\)-root closure of \(R \otimes_S S_{\infty}\).
\end{proposition}

We do not know whether \(C(R \otimes_S S_{\infty})\) and \(C(R_{\infty})\) are isomorphic or not.

\begin{proof}
    Since \(\widehat{A}_\infty\) is a perfectoid ring, we can apply the almost purity theorem from \cite[Theorem 10.9]{bhatt2022Prismsa} for the finite map \(\widehat{A}_\infty \to R \otimes_A \widehat{A}_\infty\).
    Then the perfectoidization \((R \otimes_A \widehat{A}_\infty)_{\perfd}\) is an ordinary ring and the map \(\widehat{A}_\infty \to (R \otimes_A \widehat{A}_\infty)\) is \((g)_{\perfd}\)-almost faithful and \((g)_{\perfd}\)-almost flat (see \cite[Theorem 4.0.3]{cai2023Perfectoid}).
    In particular, \(A \to (R \otimes_A \widehat{A}_\infty)_{\perfd}\) is \((g)_{\perfd}\)-almost faithful and \((g)_{\perfd}\)-almost flat.

Write \(R = S/Q\) for some ideal \(Q \subseteq S\).
    By Andr\'e's flatness lemma (\cite[Theorem 7.14]{bhatt2022Prismsa}), we can show that
    \begin{equation*}
        (R \widehat{\otimes}_A \widehat{A}_\infty)_{\perfd} \to \parenlr{\frac{(R \widehat{\otimes}_A \widehat{A}_{\infty})_{\perfd}[X_{d+1}^{1/p^\infty}, \dots, X_n^{1/p^\infty}]^{\wedge_p}}{(X_{d+1} - (t_{d+1} \otimes 1), \dots, X_n - (t_n \otimes 1))}}_{\perfd}
    \end{equation*}
    is \(p\)-completely faithfully flat. Here if \(n = d+1\), we have
    \begin{align*}
        & \parenlr{\frac{(R \widehat{\otimes}_A \widehat{A}_{\infty})_{\perfd}[X_{d+1}^{1/p^\infty}]^{\wedge_p}}{(X_{d+1} - (t_{d+1} \otimes 1))}}_{\perfd} \cong \parenlr{\frac{((S/Q) \otimes_A A_{\infty})[X_{d+1}^{1/p^\infty}]^{\wedge_p}}{(X_{d+1} - (t_{d+1} \otimes 1))}}_{\perfd} \\
        & \cong  \parenlr{\frac{(S \otimes_A A_{\infty})[X_{d+1}^{1/p^\infty}]^{\wedge_p}}{(X_{d+1} - t_{d+1}, Q)}}_{\perfd} \cong \parenlr{\frac{(S[t_1^{1/p^\infty}, \dots, t_d^{1/p^\infty}, t_{d+1}^{1/p^\infty}])^{\wedge_p}}{(Q)}}_{\perfd} \\
        & \cong ((S_{\infty}/QS_{\infty})^{\wedge_p})_{\perfd} \cong (R \widehat{\otimes}_S \widehat{S}_{\infty})_{\perfd}
    \end{align*}
    and this holds for any \(n\) by the same argument.
    Consequently, the canonical map
    \begin{equation} \label{pCompFlatAndreFlatness}
        R_{\perfd}^{A_{\infty}} = (R \widehat{\otimes}_A \widehat{A}_{\infty})_{\perfd} \to (R \widehat{\otimes}_S \widehat{S}_{\infty})_{\perfd}
    \end{equation}
    is \(p\)-completely faithfully flat and thus the map \(A \to (R \widehat{\otimes}_S \widehat{S}_{\infty})_{\perfd}\) is \(p\)-complete \((g)_{\perfd}\)-almost faithful and \(p\)-complete \((g)_{\perfd}\)-almost flat. As in the proof of \cite[Theorem 4.0.3]{cai2023Perfectoid}, this map becomes \((g)_{\perfd}\)-almost faithful and \((g)_{\perfd}\)-almost flat.

    Since \(R\) is \(p\)-torsion-free, so is the base change \(R \otimes_S S_{\infty}\).
    Its \(p\)-adic completion is a semiperfectoid ring and thus \((R \widehat{\otimes}_S \widehat{S}_{\infty})_{\perfd}\) is isomorphic to \(C(R \otimes_S S_{\infty})^{\wedge_p}\) by \cite[Corollary 5.9]{ishizuka2024Calculationa}.

    % On the other hand, our construction \(\widehat{\widetilde{R}}_{\infty, \infty}\) is \((p)^{1/p^\infty}\)-almost isomorphic to the perfectoidization \((\widehat{R}_{\infty, \infty})_{\perfd}\) of \(R_{\infty, \infty}\) (This also follows by Lemma \ref{preparation1} and \cite[Corollary 6.2]{Ish23}).
    % By applying \((-)_{\perfd}\) to the bottom right square, we have a pushout square
    % \begin{center}
    %     \begin{tikzcd}
    %         {(S \widehat{\otimes}_T \widehat{A}_{\infty, \infty})_{\perfd}} \arrow[r] \arrow[d]              & {(R \widehat{\otimes}_T \widehat{A}_{\infty, \infty})_{\perfd}} \arrow[d]                                                   &  &  &                                            \\
    %         {\widehat{S}_{\infty, \infty} \cong (\widehat{S}_{\infty, \infty})_{\perfd}} \arrow[r] & {(\widehat{R}_{\infty, \infty})_{\perfd}} \arrow[rrr, "(p)^{1/p^\infty}\text{-almost isom}", Rightarrow, no head] &  &  & {\widehat{\widetilde{R}}_{\infty, \infty}}
    %     \end{tikzcd}
    % \end{center}
\end{proof}

\begin{remark} \label{RemPerfdAlmostCM}
    As explained in the second proof of \Cref{almostsur1}, our construction of \(\widehat{C(R_{\infty})}\) is isomorphic to the perfectoidization \((\widehat{R}_{\infty})_{\perfd}\) of \(\widehat{R}_{\infty}\).
    So our theorem (\Cref{maintheorem}) asserts that there exists \(0 \neq \{g^{1/p^j}\}_{j \geq 0} \subseteq \widehat{R}_{\infty}\) such that \((\widehat{R}_{\infty})_{\perfd}\) is \((pg)^{1/p^\infty}\)-almost faithful and \((pg)^{1/p^\infty}\)-almost flat \(A\)-algebra.
    On the other hand, in \cite{cai2023Perfectoid}, they show that the perfectoidization \(R_{\perfd}^{A_{\infty}} \defeq (A_{\infty} \otimes_A R)_{\perfd}\) is a \(g'\)-almost faithful and \(g'\)-almost flat \(A\)-algebra for some non-zero element \(g' \in A\) by using the almost purity theorem \cite[Theorem 10.9]{bhatt2022Prismsa}.
    We do not known of good properties of the map \(R_{\perfd}^{A_{\infty}} \to \widehat{C(R_{\infty})}\), but we can say something when we modify the construction of \(\widehat{C(R_{\infty})}\) (see ((\ref{pCompFlatAndreFlatness}) in \Cref{BhattProof2})). So one question is that, if \(R\) is normal, then is the map \(R_{\perfd}^{A_{\infty}} \to \widehat{C(R_{\infty})}\) (almost) faithfully flat?
\end{remark}

Finally, we prove the remaining part of Main Theorem \ref{maintheorem}.

\begin{proof}[Proof of Main Theorem \ref{maintheorem} (4)]
In Main Theorem \ref{maintheorem} (4), we assume that $R$ is normal. That is, $R$ is integrally closed in the fraction field of $R$. By Cohen's structure theorem, there exists an unramified complete regular local ring $T  \defeq  W(k)[|t_2, \dots, t_d|]$ such that $T \hookrightarrow R$ is a module-finite extension. In particular, $T \hookrightarrow R$ is generically \'etale. Similar to the proof of \cite[Theorem 5.8]{shimomoto2018Integral}, there exists some $g \in T \setminus pT$ such that $T[1/pg] \hookrightarrow R[1/pg]$ is finite \'etale.

Note that $\widetilde{R}_{\infty}$ is a filtered colimit of $\widetilde{R}_{k}$ which is the integral closure of
$$
R_{k} = R[p^{1/p^k}, x_2^{1/p^k}, \dots, x_d^{1/p^k}]
$$
in $R_{k}[1/p]$. Set $r  \defeq  p x_2 \cdots x_n \in R$. Using the normality of $R$, similar to the proof of \cite[Lemma 5.7]{shimomoto2018Integral}, $R[1/r] \hookrightarrow R_{k}[1/r]=\widetilde{R}_{k}[1/r]$ is finite \'etale.

Applying \cite[Theorem 5.13]{gortz2010Algebraic} for the module-finite extension $T \hookrightarrow R$ and the associated surjective map $g : \Spec(R) \twoheadrightarrow \Spec(T)$, we have
\begin{equation*}
g(V_R(r)) = V_T(N_{L/K}(r)) \subseteq \Spec(T),
\end{equation*}
where $L  \defeq  \Frac(R)$ and $K  \defeq  \Frac(T)$ and $N_{L/K}$ is a norm map $N_{L/K} : L \to K$ satisfying $N_{L/K}(R) \subseteq A$. Then we have $V_R(r) \subseteq g^{-1}(V_T(N_{L/K}(r))) \subseteq \Spec(R)$ and taking complement shows that
\begin{equation*}
D_R(N_{L/K}(r)) = g^{-1}(D_T(N_{L/K}(r))) \subseteq D_R(r) \subseteq \Spec(R).
\end{equation*}
Then for $h \defeq pg N_{L/K}(r) \in T$, the composite map $T[1/h] \hookrightarrow R_{k}[1/h]$ is finite \'etale and this completes the proof.
\end{proof}

\appendix

\section{Witt-perfect Riemann's Extension Theorem of Perfect Rings}

In this section, we discuss that some results in \cite{nakazato2023Variant} (mainly in \S 5.1.2) can be generalized to the case of perfect rings, while \cite{nakazato2023Variant} considers only the \(p\)-torison-free case. The purpose is to complete the proof of \Cref{completionintegral} and so we mention only the minimum necessary results.

We use the following notation throughout this section.

\begin{notation}
Let \(V\) be a perfect valuation ring with a uniformizer \(t\) of characteristic \(p>0\) and let \(A\) be a \(t\)-torsion-free perfect \(V\)-algebra with a non-zero-divisor \(x \in A\). Assume that \(t, x\) is a regular sequence on \(A\). The symbol \(\widehat{(-)}\) is the \(t\)-adic completion.

Set \(A^j\) to be the Tate ring associated to \((A[t^j/x], (t))\) and let \(\mcalA^j\) be the \(t\)-completion of the Tate ring associated to \((\widehat{A}[t^j/x], (t))\) (see \cite[Definition 2.12]{nakazato2022Finite} for this terminology).
\end{notation}

\begin{lemma}[{\cite[Lemma 5.14 and Lemma 5.15]{nakazato2023Variant}}] \label{AppIsomAj}
    The following assertions hold for every \(j > 0\).
    \begin{enumerate}
        \item \(\widehat{A^j}\) and \(\mcalA^j\) are perfect \(\setF_p\)-algebras. In particular, they are uniform.
\item
The natural \(A\)-algebra map
\begin{equation} \label{AlmostIsomApp}
A[(t^j/x)^{1/p^\infty}]^{\wedge} \to \widehat{A}[(t^j/x)^{1/p^\infty}]^{\wedge}
\end{equation}
is an isomorphism.
\item
The localization of (\ref{AlmostIsomApp}) at \(t\) induces an isomorphism of Tate rings
        \begin{equation} \label{IsomTateRingAj}
            \widehat{A^j} \xrightarrow{\cong} \mcalA^j.
        \end{equation}
        \item The isomorphism (\ref{IsomTateRingAj}) induces an \(A[t^j/x]\)-algebra isomorphism
        \begin{equation} \label{IsomTateRingAjcirc}
            \widehat{A^{j\circ}} \xrightarrow{\cong} \mcalA^{j\circ}.
        \end{equation}
    \end{enumerate}
\end{lemma}

\begin{proof}
    (1): Since \(A\) is perfect, its localization and completion are perfect. Then $\widehat{A^j}$ and \(\mcalA^j\) are perfect and, in particular, uniform by \cite[Lemma 7.1.6]{bhatt2017Lecture}.

    (2): Since \(t, x\) is a regular sequence on \(A\), \cite[Proposition 3.14]{nakazato2023Variant} tells us that the natural map \(A[(t^j/x)^{1/p^n}]^{\wedge} \to \widehat{A}[(t^j/x)^{1/p^n}]^{\wedge}\) is an isomorphism for every \(n \geq 0\).
    Taking colimit of these isomorphisms and taking \(t\)-adic completion, we get the desired isomorphism (\ref{AlmostIsomApp}).

    (3): If the source (resp., target) of the isomorphism (\ref{AlmostIsomApp}) is a ring of definition of the Tate ring \(\widehat{A^j}\) (resp., \(\mcalA^j\)), then inverting \(t\) gives the desired isomorphism of Tate rings (\ref{IsomTateRingAj}). So it is enough to show that the source and target of (\ref{AlmostIsomApp}) are rings of definition of \(\widehat{A^j}\) and \(\mcalA^j\), respectively. Considering the inclusion \(tx A[(t^j/x)^{1/p^\infty}] \subseteq A[t^j/x]\), we have
\begin{equation*}
t^{j+1} A[(t^j/x)^{1/p^\infty}] \subseteq A[t^j/x] \subseteq A[(t^j/x)^{1/p^\infty}] \subseteq A^j
\end{equation*}
because of \(t^{j+1} = tx \cdot t^j/x\). Therefore, \(A[(t^j/x)^{1/p^\infty}]\) is a ring of definition of \(A^j\) and thus its \(t\)-adic completion \(A[(t^j/x)^{1/p^\infty}]^{\wedge}\) is a ring of definition of \(\widehat{A^j}\). Similarly, replacing \(A\) by \(\widehat{A}\) in the above argument, we can show that \(\widehat{A}[(t^j/x)^{1/p^\infty}]^{\wedge}\) is a ring of definition of \(\mcalA^j\).

    (4): By \cite[Proposition 2.4 (1)(b) and Lemma 2.13]{nakazato2022Finite}, the inclusion \(A[t^j/x] \hookrightarrow A^{j\circ}\) induces an isomorphism \((\widehat{A^j})^{\circ} \xrightarrow{\cong} \widehat{A^{j\circ}}\) of topological rings.
    On the other hand, the isomorphism (\ref{IsomTateRingAj}) induces an isomorphism \((\widehat{A^j})^{\circ} \xrightarrow{\cong} \mcalA^{j\circ}\) of topological rings. This completes the proof.
    % In the proof of \cite[Lemma 5.14 (2), Proposition 5.15 (4) and Theroem 5.16 (b)]{nakazato2023Variant}, they use the mixed characteristic assumption only in applying the Riemann's extension theorem (\cite[Theorem 8.1]{nakazato2023Variant}) but the theorem can be applied to the perfect \(\setF_p((T^{1/p^\infty}))\)-algebra \(\mcalA\) where \(T\) maps to \(t\). So the proof of \cite[Theorem 5.16 (b)]{nakazato2023Variant} can be applied to our case.
\end{proof}

Also, we can prove the following result which is a consequence of the perfectoid Riemann's extension theorem.

\begin{lemma}[{\cite[Theorem 5.16]{nakazato2023Variant}}] \label{AppWitt-perfectRiemannExt}
    There is an injective \(A\)-algebra map:
    \begin{equation} \label{CompletionLimit}
        \widehat{\lim_{j > 0} A^{j\circ}} \hookrightarrow \lim_{j > 0} \widehat{A^{j\circ}}
    \end{equation}
    which is \((x)^{1/p^\infty}\)-almost surjective.
\end{lemma}

\begin{proof}
    By the definition of \(t\)-adic completion and commutativity of limits, we have isomorphisms of \(A\)-algebras:
    \begin{equation*}
        \widehat{\lim_{j > 0} A^{j\circ}} \cong \lim_{n > 0}((\lim_{j > 0} A^{j\circ})/(t^n))~\text{and}~\lim_{j > 0}\widehat{A^{j\circ}} \cong \lim_{n > 0}\lim_{j > 0} (A^{j\circ}/(t^n)).
    \end{equation*}
    Applying the left exact functor \(\lim_{n > 0}\) for the exact sequence \(0 \to \{A^{j\circ}\}_{j > 0} \xrightarrow{\cdot t^n} \{A^{j\circ}\} \to \{A^{j\circ}/(t^n)\}_{j > 0} \to 0\), we have an injective map \((\lim_{j > 0} A^{j\circ})/(t^n) \hookrightarrow \lim_{j > 0}(A^{j\circ}/(t^n))\) and this makes an injective map of \(A\)-algebras
    \begin{equation*}
        \widehat{\lim_{j > 0} A^{j\circ}} \cong \lim_{n > 0}((\lim_{j > 0} A^{j\circ})/(t^n)) \hookrightarrow \lim_{n > 0}\lim_{j > 0} (A^{j\circ}/(t^n)) \cong \lim_{j > 0} \widehat{A^{j\circ}}.
    \end{equation*}
    We want to prove that this map is \((x)^{1/p^\infty}\)-almost surjective.
    The universality of the \(t\)-adic completion and limits, the \(A\)-algebra map \(\widehat{A} \to \widehat{\lim_{j > 0}A^{j\circ}} \hookrightarrow \lim_{j > 0}\widehat{A^{j\circ}}\) is the unique extension of the structure map \(A \to \lim_{j > 0} \widehat{A^{j\circ}}\) along \(A \to \widehat{A} \to \widehat{A^{j\circ}}\).
    By the perfectoid Riemann extension theorem (\cite[Theorem 8.1]{nakazato2023Variant} or \cite[Theorem 4.2]{bhatt2018Direct}), the canonical \(A\)-algebra map
    \begin{equation*}
        \widehat{A} \to \lim_{j > 0} \widehat{A^{j\circ}}
    \end{equation*}
    is \((x)^{1/p^\infty}\)-almost surjective (see also \cite[Lemma 3.5]{bhatt2018Direct}).
    As explained above, this map is the same as the map \(\widehat{A} \to \widehat{\lim_{j > 0}A^{j\circ}} \hookrightarrow \lim_{j > 0}\widehat{A^{j\circ}}\). The latter map becomes \((x)^{1/p^\infty}\)-almost surjective and this completes the proof.
\end{proof}

Next we consider the invariant ring of \(G\)-Galois covering after taking \((-)^{j\circ}\).

\begin{lemma} \label{GaloisStableAj}
    Let \(A[1/tx] \hookrightarrow C[1/tx]\) be a \(G\)-Galois extension for some finite group \(G\) and \(A\)-algebra \(C\) such that \(t\) and \(x\) are non-zero-divisors of \(C\).
    Assume that \(C\) is the integral closure of \(A\) in \(C[1/tx]\). Then there exists an isomorphism
    \begin{equation}
        \widehat{A^{j\circ}} \xrightarrow{\cong} \parenlr{\widehat{C^{j\circ}}}^G
    \end{equation}
    for every \(j > 0\), where \(C^j\) is the Tate ring associated to \((C[t^j/x], (t))\) as \(A^j\).
\end{lemma}

\begin{proof}
    Let us prove that \(A^j\) is preuniform in the sense of \cite[Definition 2.14]{nakazato2022Finite}. First, recall that \(t^{j+1} A[(t^j/x)^{1/p^\infty}] \subseteq A[t^j/x]\) and \(A[(t^j/x)^{1/p^\infty}]\) is a perfect $\mathbb{F}_p$-algebra. As discussed in the proof of \cite[Lemma 7.1.6]{bhatt2017Lecture}, we get that \(A[(t^j/x)^{1/p^\infty}]\) is \((t)^{1/p^\infty}\)-almost isomorphic to \(A^{j\circ}\), which proves that $A^j$ is preuniform. Hence the hypotheses of \cite[Proposition 4.7 and Proposition 4.8]{nakazato2023Variant} are satisfied by setting  $A_0 = A$, $g = x$, $B_0 = C$ and $B' = C[1/tx]$. By \cite[Proposition 4.8 (3)]{nakazato2023Variant}, we have an isomorphism of topological rings
    \begin{equation*}
        \mathcal{C}^j \cong C[1/tx] \otimes_{A[1/tx]} \mcalA^j
    \end{equation*}
    and in particular, the structure of a Tate ring $\mathcal{C}^j$ is compatible with the canonical structure of a Tate ring over \(C[1/tx] \otimes_{A[1/tx]} \mcalA^j\) as in \cite[Lemma 4.1]{nakazato2023Variant}. As $A[1/tx] \hookrightarrow C[1/tx]$ is a $G$-Galois covering, its base change $\mcalA^j \hookrightarrow \mathcal{C}^j$ is also a $G$-Galois covering by \cite[Lemma 12.2.7]{ford2017Separable}.
    Since $\mcalA^j$ is uniform by in \Cref{AppIsomAj} (1), we see that \(\mcalA^{j \circ} = (\mathcal{C}^{j \circ})^G\) by \cite[Lemma 4.5]{nakazato2023Variant}.
    The above (\ref{IsomTateRingAjcirc}) in \Cref{AppIsomAj} shows that
    \begin{equation*}
        \widehat{A^{j \circ}} \xrightarrow{\cong} \mcalA^{j \circ}=\left(\mathcal{C}^{j \circ}\right)^G \xrightarrow{\cong} \left(\widehat{C^{j \circ}}\right)^G
    \end{equation*}
    and this completes the proof.
\end{proof}

Notice that there is another way to deduce \Cref{GaloisStableAj}. This is found in \cite[Discussion 5.22 (1)]{nakazato2023Variant} by using Faltings' almost purity theorem (see \cite[Theorem 2.4]{faltings1988PAdic} or \cite[Proposition 3.4]{olsson2009Faltings}).

\end{document}